\theoremstyle{plain}
\newtheorem{Theorem}{Theorem}[section]
\newtheorem*{MainTheorem}{Main Theorem}
\newtheorem{Proposition}[Theorem]{Proposition}
\newtheorem{Lemma}[Theorem]{Lemma}
\newtheorem{Corollary}[Theorem]{Corollary}
\theoremstyle{definition}
\newtheorem{Definition}[Theorem]{Definition}
\newtheorem{Notation}[Theorem]{Notation}
\newtheorem{Remark}[Theorem]{Remark}
\newtheorem{Setup}[Theorem]{Setup}
\newtheorem{Construction}[Theorem]{Construction}
\newcommand{\Gal}{{\rm Gal}}
\newcommand{\Hom}{{\rm Hom}}
\newcommand{\Z}{{\mathbb{Z}}}
\newcommand{\bbZ}{\Z}
\newcommand{\C}{{\mathbb{C}}}
\newcommand{\bbC}{\C}
\newcommand{\R}{{\mathbb{R}}}
\newcommand{\bbR}{\R}
\newcommand{\N}{{\mathbb{N}}}
\newcommand{\bbN}{\N}
\newcommand{\Q}{{\mathbb{Q}}}
\newcommand{\bbQ}{\Q}
\newcommand{\Quot}{{\rm Quot}}
\newcommand{\calE}{{\mathcal E}}
\newcommand{\calF}{{\mathcal F}}
\newcommand{\frp}{{\mathfrak p}}
\newcommand{\st}{:}
\newcommand{\hefresh}{\setminus}
\newcommand{\cont}{\subseteq}
\newcommand{\Mat}{{\rm Mat}}
\newcommand{\GL}{{\rm GL}}
\newcommand{\isom}{\cong}
\newcommand{\res}{{\rm res}}
\newcommand{\Gam}{\Gamma}
\newcommand{\T}{{\{t\}}}
\newcommand{\rembox}{\hfill$\blacksquare$}
\newcommand{\subdemoinfo}[2]{{\noindent\sc #1. #2}}
\def\Quot{{\rm Quot}}
\def\norm{{\rm norm}}
\def\lsdp{\ltimes}
\def\dotcup{
 \def\dotcupD{\mathbin{\mathop{\smash\cup}\limits^\cdot}}
 \def\dotcupS{\mathbin{\mathop{\smash\cup}\limits^\cdot}}
 \mathchoice{\dotcupD}{\dotcupD}{\dotcupS}{}}
\def\dotunion{
\def\dotunionD{\bigcup\kern-10.5pt\cdot\kern5pt}
\def\dotunionT{\bigcup\kern-8.5pt\cdot\kern3.5pt}
\mathop{\mathchoice{\dotunionD}{\dotunionT}{}{}}}
\numberwithin{equation}{Theorem}
\begin{document}

\bibliographystyle{alpha}

\title{Split embedding problems over the\\ open arithmetic disc\thanks{This research was supported by the DFG program ``Initiation and Intensification of Bilateral Cooperation''.}}
\author{Arno Fehm\thanks{University of Konstanz}\; and Elad Paran\thanks{Open University of Israel}} \date{\today}

\maketitle

\abstract{
Let $\Z\T$ be the ring of arithmetic power series that converge on the complex open unit disc.
A classical result of Harbater asserts that every finite group occurs as a Galois group over the quotient field of $\Z\T$. 
We strengthen this by showing that every finite split embedding problem over $\Q$ acquires a solution over this field.
More generally, we solve all $t$-unramified finite split embedding problems over the quotient field of $\mathcal{O}_K\T$, where $\mathcal{O}_K$ is the ring of integers of an arbitrary number field $K$. 
}

\section{Introduction}

The inverse Galois problem (IGP) over a field $K$ asks whether all finite groups occur as Galois groups over $K$. 
By Hilbert's irreducibility theorem, a positive answer to the classical IGP over the field of rational numbers $\Q$ would follow from a positive answer to the IGP over the rational function field $\Q(t)$.

In \cite{HarbaterCAPS}, \cite{HarbaterARAPS} and \cite{HarbaterGCAS}, Harbater introduced and studied a family of fields which ``approximate" the field $\Q(t)$: Given a positive number $r$, consider the ring $\Z_r[[t]]$ 
of continuous complex valued functions on the closed disc of radius $r$ that are holomorphic on the interior and whose Taylor expansion
has only integer coefficients.
For each $r < 1$,  $\Z_r[[t]]$ properly contains the ring $\Z[t]$ of polynomials over the integers, while for each $r \geq 1$, $\Z_r[[t]]$ coincides with $\Z[t]$. Intuitively, the bigger $r (<1)$ is, the closer $\Z_r[[t]]$ is to $\Z[t]$, and hence the closer the quotient field $\Quot(\Z_r[[t]])$ is to $\Q(t)$. Harbater also considered a ``final" object of this family, closest to $\Z[t]$ - the ring $\Z\{t\}$ = $\bigcap_{r < 1} \Z_r[[t]]$ of arithmetic power series which converge on the open unit disc. 
Harbater proved that the IGP has a positive solution over the quotient field of each of the rings $\Z_r[[t]], r < 1$, and $\Z\{t\}$. 

In order to prove this result, Harbater introduced his ``patching" method, which enables one to patch realizations of cyclic groups over these fields into a realization of a group generated by them. The patching of Galois groups has since become a central method in Galois theory, leading to several major results. 
For example, the solution of Abhyankar's conjecture by Harbater and Raynaud, and Pop's 
solution over $K(X)$ of all finite split embedding problems over $K$, for any {\em ample} (or {\em large}) field $K$, cf.~\cite{Pop}.
A {\em finite split embedding problem} (FSEP) $\Gamma \lsdp G \to \Gamma$ over $K$ is given by a finite Galois extension $L$ of $K$ with Galois group $\Gamma$ acting on a finite group $G$. 
A {\em solution} to this embedding problem over a regular extension $E$ of $K$ is a Galois extension $F$ of $E$ containing $L$, such that $\Gal(F/E) \isom \Gamma \lsdp G$, and the restriction of automorphisms from $F$ to $EL$ coincides with the projection onto $\Gamma$.

The solvability of split embedding problems is a much stronger Galois theoretic property than merely the realization of finite groups.
It has been extensively studied in recent years, for example in \cite{Pop}, \cite{HarbaterStevenson}, \cite{Paran}, \cite{BSHH}, \cite{PopHenselian}.
In \cite{FehmParan} it is shown that for each $r < 1$, the quotient field $E_r$ of $\Z_r[[t]]$ is ample and hence every FSEP over $E_r$ is solvable over $E_r(X)$,
from which, using the fact that $E_r$ is Hilbertian, one can deduce the solvability of every FSEP over $E_r$ itself, thus extending Harbater's solution of the IGP over $E_r$. However, the field $E = \Quot(\bbZ\{t\})$, which is the most interesting within this family (being closest to $\Q(t)$) remained outside the scope of the results of \cite{FehmParan}: It is unknown whether $E$ is ample, see discussion after Theorem \ref{cp6.6}. Nevertheless, in this work we show that every FSEP over the rational numbers $\Q$ is solvable over $E$. More generally, we prove the following result on the solvability of split embedding problems over $E$ (which generalizes Harbater's solution of the IGP over $E$):

\begin{MainTheorem} 
Let $E'/E$ be a finite Galois extension with group $\Gamma$ acting on a finite group $G$, and suppose the prime $t$ of $E$ is unramified in $E'$. 
Then the FSEP $\Gamma \lsdp G \to \Gamma$ has a solution.
\end{MainTheorem}

For the field $E$ the methods of \cite{FehmParan} fail altogether. Therefore, here we take a different path -- we exploit the axiomatic approach to patching developed by Haran-Jarden-V\"olklein in \cite{HaV96}, \cite{HaJ98a} and \cite{HaJ98b}, where we replace the ``analytic rings" constructed there by a new type of rings of power series with special properties, generalizing the rings used by Harbater in \cite{HarbaterCAPS}. In order to deal with the Galois action defined by a given FSEP, we work with rings of power series whose coefficients belong to the ring of integers of a certain number field. This Dedekind domain usually does not have the nice properties of $\bbZ$ (most notably, it need not be factorial), which leads to more delicate number theoretic constructions than in \cite{HarbaterCAPS}, which, in particular, yield a stronger matrix factorization result. 

In the recent work \cite{Poineau}, Poineau applies patching of analytic Berkovich spaces in order to extend Harbater's solution of the IGP from $E$ to the quotient field $E_K$ of $\mathcal{O}_K\T$, where $K$ is an arbitrary number field with ring of integers $\mathcal{O}_K$. Using our constructions we are able to generalize that result as well, replacing the field $E$ in our Main Theorem with $E_K$ (Theorem \ref{cp6.6}). Moreover, we show that our solutions are regular over $K$.

\subsection*{Acknowledgements}

We thank the referee for his careful reading and many helpful suggestions.

\section{Analytic fields}

In this section we define the analytic rings needed for the patching machinery and prove some of their basic properties.

\subsection{Rings of convergent power series}

We start by defining rings of convergent power series.

\begin{Definition}
A {\bf norm} on a ring $R$ is a map $||\cdot||:R\rightarrow\bbR_{\geq0}$ such that
$||\pm1||=1$, $||x||=0$ iff $x=0$, $||x+y||\leq||x||+||y||$, and $||xy||\leq||x||\cdot||y||$ for all $x,y\in R$.
If moreover $||x+y||\leq\max\{||x||,||y||\}$, then $||\cdot||$ is called {\bf ultrametric}.
A norm is an {\bf absolute value} if it satisfies $||xy||=||x||\cdot||y||$ for all $x,y\in R$.
An absolute value that is not ultrametric is {\bf archimedean}.\rembox
\end{Definition}

\begin{Remark}\label{cp2.1} 
Fix an algebraic closure $\tilde{\bbQ}$ of $\bbQ$.
The field of complex numbers $\bbC$ is complete with respect to the usual archimedean absolute value $|\cdot|$.
The restriction of this absolute value to $\bbQ$ extends to a norm 
on $\tilde{\bbQ}$, by 
$$
 ||x|| = \max_{\sigma \in \Hom(\tilde{\bbQ},\bbC)} |\sigma(x)|,
$$
where $\Hom(\tilde{\bbQ},\bbC)$ denotes the set of all embeddings of $\tilde{\bbQ}$ into $\bbC$.
Note that if $K$ is a number field (that is, a finite extension of $\bbQ$), then 
$$
 ||x|| = \max_{\sigma \in \Hom(K,\bbC)} |\sigma(x)|
$$ 
for each $x \in K$, and $\Hom(K,\bbC)$ is the finite set of embeddings of $K$ into $\bbC$.
\rembox
\end{Remark}

\begin{Notation}\label{cp2.2} 
For each $r > 0$, let 
$\bbC_r[[t]]$ be the ring of continuous complex functions on the closed disc of radius $r$ around the origin which are holomorphic on the interior of the disc. Then $\bbC_r[[t]]$ is complete with respect to the uniform norm $|\cdot|_r$ on the closed disc of radius $r$. 
Note that
$$
 \bbC_{1^-}[[t]] := \bigcap_{r<1}\bbC_r[[t]] = \left\{ \sum_{n=0}^\infty a_nt^n\in\bbC[[t]] \;:\; \limsup_{n\rightarrow\infty}|a_n|^{1/n}\leq1\right\}
$$
is the ring of holomorphic functions on the open unit disc.
We will identify each $\sigma\in\Hom(\tilde{\bbQ},\bbC)$ with its extension $\sigma\in\Hom(\tilde{\bbQ}[[t]],\bbC[[t]])$ given by
$\sigma(\sum_{n=0}^\infty a_nt^n)=\sum_{n=0}^\infty\sigma(a_n)t^n$.
For $r > 0$ put 
\begin{eqnarray*}
 \tilde{\bbQ}_r[[t]] &:=& \bigcap_{\sigma \in \Hom(\tilde{\bbQ},\bbC)} \sigma^{-1}(\bbC_r[[t]]) \\
  &=& \left\{ f\in\tilde{\bbQ}[[t]] \;:\; \sigma(f)\in\bbC_r[[t]]  \mbox{ for all }\sigma\in\Hom(\tilde{\bbQ},\bbC)\right\}.
\end{eqnarray*}
We also put 
$$
 \tilde{\bbQ}\T := \bigcap_{r < 1}\tilde{\bbQ}_r[[t]].
$$ 
For a subring $R$ of $\tilde{\bbQ}$, put 
$$
 R_r[[t]] := \tilde{\bbQ}_r[[t]] \cap R[[t]]
$$ 
and 
$$
 R\T := \tilde{\bbQ}\T \cap R[[t]].
$$
\rembox
\end{Notation}

\begin{Remark}\label{cp2.3} 
If $R \cont \bbQ$, then 
$$
 R\T = R[[t]] \cap \bbC_{1^-}[[t]]
$$ 
is the ring 
of power series in $R[[t]]$ that converge on the open unit disc, and thus coincides with the ring denoted
by $R\{t\}$ in \cite{HarbaterCAPS}.
\rembox
\end{Remark}

\begin{Remark}\label{cp2.4}
If $R$ is a subring of a number field $K$, then the fact that $\Hom(K,\C)$ is finite implies that
$$
 R\T = \left\{ \sum_{n=0}^\infty a_nt^n\in R[[t]] \;:\; \limsup_{n\rightarrow\infty}||a_n||^{1/n}\leq1 \right\}.
$$
In particular, if 
$f = \sum_{n=0}^\infty a_n t^n \in R[[t]]$ satisfies $||a_n|| < C$ for all $n \geq 0$, for some constant $C$, then $f \in R\T$. 
\rembox
\end{Remark}

\subsection{A Weierstrass division theorem}
For the rest of this section, fix a number field $K$ and let $R=\mathcal{O}_K$ be its ring of integers.
We show that the rings of power series just defined satisfy a variant of the Weierstrass division theorem.

\begin{Lemma}\label{cp2.6} 
For each $0 \neq g \in R$ there exists a positive bound $C_g \in \bbR$ such that for each $f \in K$ there exists $h \in R$ satisfying 
$||f - gh|| < C_g$.
\end{Lemma}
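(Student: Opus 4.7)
The plan is to reduce the problem to the uniform approximation of elements of $K$ by elements of $R=\mathcal{O}_K$. Setting $q := f/g \in K$, we have $f - gh = g(q-h)$, and submultiplicativity of $\|\cdot\|$ gives $\|f-gh\| \leq \|g\| \cdot \|q-h\|$. So it suffices to produce a constant $M$, depending only on $K$, such that every $q \in K$ lies within distance $\leq M$ of some element of $R$; then $C_g := \|g\|\cdot M + 1$ will do.

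To produce such an $M$, I would use that $R$ is a free $\mathbb{Z}$-module of rank $n := [K:\mathbb{Q}]$. Fix a $\mathbb{Z}$-basis $\omega_1, \dots, \omega_n$ of $R$. Since $K = R \otimes_{\mathbb{Z}} \mathbb{Q}$, each $q \in K$ has a unique expansion $q = \sum_{i=1}^n c_i \omega_i$ with $c_i \in \mathbb{Q}$. Choose $m_i \in \mathbb{Z}$ with $|c_i - m_i| \leq 1/2$ and set $h := \sum_{i=1}^n m_i \omega_i \in R$. For each embedding $\sigma \in \Hom(K,\mathbb{C})$ the scalars $c_i - m_i$ are rational and hence fixed by $\sigma$, so $\|c\omega\| = |c|\cdot\|\omega\|$ for $c \in \mathbb{Q}$; the triangle inequality then gives
$$
\|q - h\| = \Big\|\sum_{i=1}^n (c_i - m_i)\omega_i\Big\| \leq \sum_{i=1}^n |c_i - m_i|\cdot\|\omega_i\| \leq \tfrac{1}{2}\sum_{i=1}^n \|\omega_i\|,
$$
a bound independent of $q$. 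Combining this with the reduction above yields the desired $C_g$.

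Conceptually the argument is just that $\mathcal{O}_K$ is a full-rank lattice in $K \otimes_{\mathbb{Q}} \mathbb{R}$ under the Minkowski-type embedding encoded in $\|\cdot\|$, so it has bounded covering radius. The only subtle point to guard against is the red herring that $\mathcal{O}_K$ need not be Euclidean or even a PID: we are not performing an actual division with remainder, just a bounded-error rounding in an integral basis. I therefore anticipate no serious obstacle; the entire proof reduces to fixing an integral basis of $R$ and rounding each rational coordinate of $f/g$ to the nearest integer.
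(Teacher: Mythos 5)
Your proposal is correct and follows essentially the same route as the paper: reduce to approximating $f/g$ by an element of $R$ via submultiplicativity, then round the rational coordinates of $f/g$ in a fixed integral basis to nearest integers, bounding the error by $\tfrac{1}{2}\sum_i\|\omega_i\|$. The only cosmetic difference is that the paper takes $C_g = \|g\|\cdot\sum_i\|\omega_i\|$ and uses the strict inequality $\tfrac12\sum_i\|\omega_i\| < \sum_i\|\omega_i\|$ instead of adding $1$.
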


\begin{proof} 
Let $b_1,\ldots,b_n \in R$ be an integral basis of $K$. In particular, $K = \sum_{i=1}^n b_i\bbQ$. Let $C_1 = \sum_i ||b_i||$. For $f_1 \in K$, write $f_1 = \sum_i \lambda_i b_i$ with $\lambda_1,\ldots,\lambda_n \in \bbQ$, take $\mu_1,\ldots,\mu_n \in \bbZ$ such that $|\lambda_i - \mu_i| \leq {1 \over 2}$ for each $1 \leq i \leq n$ and put $h = \sum_i \mu_i b_i \in R$. Then $||f_1-h|| = ||\sum_i (\lambda_i - \mu_i)b_i|| \leq \sum_i |\lambda_i - \mu_i|\cdot ||b_i|| \leq {1 \over 2} \cdot C_1< C_1$.

Now let $C_g = {C_1 \cdot ||g||}$. 
Given $f \in K$, let $f_1 = {f \over g} \in K$. By the previous paragraph there exists $h \in R$ such that $||f_1-h|| < C_1$, hence $||f-gh|| \leq ||g||\cdot||f_1 - h|| < C_g$. 
\end{proof}

We will in several places use the constant $C_1$, which is just $C_g$ for $g=1$.

Let $\calF$ be the family of non-trivial valuations on $K$ (corresponding to the maximal ideals of $R$, i.e.~the non-archimedean primes of $K$).
For each $a \in R$, 
let $\calF_a$ be the finite subfamily of valuations which are positive on $a$. For each $v \in \calF$, denote the valuation ring of $v$ (in $K$) by $R_v$.

\begin{Lemma}\label{cp2.7} 
Let $0 \neq g \in R$ and let $C_g$ be the bound given by Lemma \ref{cp2.6}. For $0 \neq a \in R$ and $f \in R[{1 \over a}]$ there exists $h \in R[{1 \over a}]$ such that $f-gh \in R$ and $||f-gh|| < C_g$.
\end{Lemma}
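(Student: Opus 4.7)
The plan is to handle the problem in two steps: first produce some $h_1 \in R[{1 \over a}]$ with $f - gh_1 \in R$ (the integrality condition, without worrying about the norm), then add a correction $h_2 \in R$ supplied by Lemma \ref{cp2.6} to enforce the norm bound.

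For the first step, I look for $s \in R$ with $s - f \in g\cdot R[{1 \over a}]$. This is equivalent to requiring $v(s-f) \geq v(g)$ for every $v \in \calF \setminus \calF_a$. When $v \notin \calF_a \cup \calF_g$ the condition is automatic: $s \in R \subset R_v$, $f \in R[{1 \over a}] \subset R_v$ (since $v(a)=0$), and $v(g)=0$. For each $v$ in the finite set $\calF_g \setminus \calF_a$, the condition amounts to the single congruence $s \equiv f \pmod{v^{v(g)}}$ in $R$, which is well-defined because $v \notin \calF_a$ forces $f \in R_v$. Since the primes in $\calF_g \setminus \calF_a$ are pairwise coprime in the Dedekind domain $R$, the Chinese Remainder Theorem solves these congruences simultaneously, yielding some $s \in R$. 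Then $h_1 := (f - s)/g \in R[{1 \over a}]$, and $f - g h_1 = s \in R$.

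For the second step, $f - g h_1 \in R \subseteq K$, so Lemma \ref{cp2.6} supplies an $h_2 \in R$ with $\|(f - g h_1) - g h_2\| < C_g$. Setting $h := h_1 + h_2 \in R[{1 \over a}]$, one has $f - g h = (f - g h_1) - g h_2$, which is a difference of two elements of $R$ and therefore lies in $R$, while its norm is $< C_g$ by construction.

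The only real obstacle is the CRT step: producing an element of $R$ that $v$-adically approximates $f$ modulo $v^{v(g)}$ simultaneously at every prime $v \in \calF_g \setminus \calF_a$. This is a finite list of primes, and at each of them $f$ already lies in the relevant localization, so CRT in $R$ applies without issue. Once $f - gh_1 \in R$ is secured, the norm bound is nothing more than a direct invocation of Lemma \ref{cp2.6}.
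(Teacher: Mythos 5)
Your proof is correct. It differs from the paper's only in the tool used for the first step: the paper invokes the strong approximation theorem (Cassels, Ch.~10, Thm.~4.1) to produce $h\in K$ with $v(h-\tfrac{f}{g})\geq v(\tfrac1g)$ at the places in $\calF_a$ and $v(h)\geq 0$ elsewhere, then checks $h\in R[\tfrac1a]$ and $f-gh\in R$; you instead observe that the only nontrivial conditions live at the finitely many primes of $\calF_g\setminus\calF_a$, where they reduce to congruences $s\equiv f \pmod{\frp_v^{v(g)}}$ solvable by the Chinese Remainder Theorem in the Dedekind domain $R$ (using $R/\frp_v^{n}\cong R_v/\frp_v^{n}R_v$ to make sense of the congruence for $f\in R_v$). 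The two are essentially interchangeable here --- strong approximation at finitely many finite places is CRT in disguise --- but your version is slightly more elementary and self-contained, at the cost of having to identify explicitly which primes carry a nontrivial condition. The second step, correcting by an element of $R$ via Lemma \ref{cp2.6} to get $\|f-gh\|<C_g$ while preserving $f-gh\in R$, is identical to the paper's.
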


\begin{proof}
The strong approximation theorem \cite[Chapter 10, Theorem 4.1]{Cas86} gives an element $h \in K$ such that $v(h - {f \over g}) \geq 0 \geq v({1 \over g})$ for each $v \in \calF_a$, and $v(h) \geq 0$ for each $v \in \calF \hefresh \calF_a$. Since $R$ is integrally closed, so is $R[{1 \over a}]$. It follows that $R[{1 \over a}] = \bigcap_{v \in \calF \hefresh \calF_a} R_v$, hence $h \in R[{1 \over a}]$. Let $v \in \calF \hefresh \calF_a$. Then $v(f) \geq 0$ (since $f \in R[{1 \over a}]$), hence $v(h-{f\over g}) \geq \min (v(h), v({f \over g})) \geq v({1 \over g})$. We conclude that $v(h - {f \over g})\geq v({1 \over g})$ holds for all $v \in \calF$, hence $f-gh \in \bigcap_{v \in \calF} R_v = R$, as needed. By Lemma \ref{cp2.6}, we may subtract an element of $R$ from $h$ to assume that $||f-gh|| < C_g$.
\end{proof}

The following proposition can be viewed as a form of ``Weierstrass division".

\begin{Proposition}\label{cp2.8} 
Let $0 \neq a \in R$ and let $A  =  R\T[{1 \over a}]$. Let $D$ be either $R[{1 \over a}]\T$ or $R[{1 \over a}][[t]]$.
Then for each $0 \neq g \in D$ we have 
$$
 D  = A + g\cdot(1+tD).
$$
\end{Proposition}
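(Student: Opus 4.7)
The plan is to construct $\alpha \in A$ and $d \in D$ inductively in the coefficients of $t$, with Lemma \ref{cp2.7} as the main algebraic tool. The iteration works uniformly in both cases; the case distinction surfaces only in the convergence analysis at the end.

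As a preliminary reduction, by replacing $g$ with $a^k g$ for $k$ large enough to clear the denominator of $g_0 := g(0) \in R[1/a]$, I may assume $g_0 \in R$. This is harmless because $a^k$ is a unit in both $A$ and $D$ (as $1/a \in R[1/a] \subseteq A \cap D$), so the statement for $a^k g$ is equivalent to that for $g$.

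Expanding $f = \alpha + g(1+td)$ coefficient by coefficient forces $\alpha_0 = f_0 - g_0 \in R[1/a]$, and, for $n \geq 1$,
$$
 \alpha_n \;=\; \Bigl( f_n - g_n - \sum_{j=1}^{n-1} g_j\, d_{n-1-j} \Bigr) \;-\; g_0\, d_{n-1}.
$$
Inductively assume $d_0,\ldots,d_{n-2}\in R[1/a]$ have been constructed. The bracketed quantity then lies in $R[1/a]$, and Lemma \ref{cp2.7} applied with the ring element $g_0\in R$ supplies $d_{n-1}\in R[1/a]$ for which $\alpha_n\in R$ and $||\alpha_n||<C_{g_0}$. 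The uniform bound on $||\alpha_n||$ combined with Remark \ref{cp2.4} yields $\sum_{n\geq 1}\alpha_n t^n\in R\T$, hence $\alpha=\alpha_0+\sum_{n\geq 1}\alpha_n t^n\in R[1/a]+R\T \subseteq R\T[1/a]=A$.

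For $D=R[1/a][[t]]$ nothing further is required: $d=\sum d_n t^n$ lies automatically in $D$. For $D=R[1/a]\T$, however, the recursion $d_{n-1}=g_0^{-1}(\mathrm{known}-\alpha_n)$ can a priori make $||d_n||^{1/n}$ exceed $1$: if $\sigma(g)$ has a zero $z_0$ in the open unit disc for some embedding $\sigma:K\hookrightarrow\C$, then $\sigma(d)=\sigma(f-\alpha-g)/(t\,\sigma(g))$ acquires a pole at $z_0$ unless $\sigma(f-\alpha)$ vanishes there to matching order. This is the main obstacle. The fix is to use the freedom in $A$ to choose $\alpha$ so as to interpolate $f-g$, in a Galois-equivariant way, at the Galois-stable set of zeros of $g$ visible through the embeddings $\sigma$; once this is built into the construction (for instance by first subtracting from $f$ a suitable element of $R[1/a][t]\subseteq A$ handling the interpolation, and then running the iteration above on the corrected $f$), the resulting $d$ converges on the open unit disc and lies in $D=R[1/a]\T$.
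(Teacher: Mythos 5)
Your recursion, the reduction to $g_0\in R$, and the appeal to Lemma \ref{cp2.7} coincide with the paper's argument, and your treatment of the case $D=R[{1\over a}][[t]]$ is essentially the paper's proof and is correct (up to one omission: you must first dispose of the case $g(0)=0$ by factoring out the largest power $t^m$ dividing $g$ and absorbing the truncation $f-t^m\hat f\in R[{1\over a}][t]$ into $A$, since your recursion solves for $d_{n-1}$ via the term $g_0d_{n-1}$ and therefore needs $g_0\neq0$). The genuine gap is in the case $D=R[{1\over a}]\T$, which is the case the paper actually uses. You correctly diagnose that bounding $||\alpha_n||$ gives no control over $||d_n||$, so $d$ need not converge on the open disc; but the proposed repair is not viable. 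For an embedding $\sigma$, the zero set of $\sigma(g)$ in the open unit disc is in general infinite (accumulating at the boundary) and consists of arbitrary complex numbers, so no element of $R[{1\over a}][t]$ --- nor, by any argument you give, of $A$ --- can interpolate $f-g$ at all of these points to matching order, simultaneously for all $\sigma$. Moreover, even granting such an interpolant $p$, writing $f-p=\alpha'+g(1+td)$ and rerunning the iteration gives total remainder $\alpha=p+\alpha'$, and nothing forces $\sigma(f-\alpha)$ to still vanish at the zeros of $\sigma(g)$; the interpolation property of $p$ alone is irrelevant to the sum.

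The correct fix is much simpler and goes in the opposite direction: when $D=R[{1\over a}]\T$, do not insist on $||\alpha_n||<C_{g_0}$ at all. After Lemma \ref{cp2.7} has produced $d_{n-1}\in R[{1\over a}]$ with $\alpha_n\in R$, apply Lemma \ref{cp2.6} with $g=1$ to subtract an element $r\in R$ from $d_{n-1}$ so that $||d_{n-1}||<C_1$; this changes $\alpha_n$ by $g_0r\in R$, so $\alpha_n\in R$ is preserved. Now the coefficients of $d$ are uniformly bounded, hence $d\in R[{1\over a}]\T=D$ by Remark \ref{cp2.4}, and then $\alpha=f-g(1+td)$ is a difference of series converging on the open disc whose coefficients (from degree $1$ on) lie in $R$; hence $\alpha\in R[{1\over a}]+ (R[[t]]\cap\tilde{\bbQ}\T)=R[{1\over a}]+R\T\cont A$, with no interpolation needed. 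In short: in the convergent case one must bound the quotient $d$, not the remainder $\alpha$ --- the convergence of the remainder then comes for free from $f,g,d\in D$.
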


\begin{proof}
Write $g = \sum_{i = m}^\infty g_i t^i$ with $g_m \neq 0$. Let $f = \sum_{i = 0}^\infty f_i t^i \in D$. 
It suffices to find $h \in D$ with constant term $1$ and such that $f - gh \in A$. Put $\hat{f} = \sum_{i = m}^\infty f_i t^{i - m}$. Then $f - t^m\hat{f} \in R[{1 \over a}][t] \cont A$. Replace $f$ with $\hat{f}$ and $g$ with $t^{-m}g$ to assume that $m = 0$. Since $a \in R[{1 \over a}]^\times$, we may multiply $g$ and $f$ with a power of $a$ to assume that $g_0 \in R$. We now recursively construct the coefficients of $h$ as follows: Put $h_0 = 1$. Suppose we have constructed $h_0,\ldots,h_{n-1}\in R[{1 \over a}]$. Let $b_n :=  f_n - \sum_{i+j = n, j \neq n}g_ih_j$. Apply Lemma \ref{cp2.7} to find $h_n \in R[{1 \over a}]$ such that $b_n - g_0h_n \in R$. If $D = R[{1 \over a}][[t]]$ we also assume (by Lemma \ref{cp2.7}) that $||b_n - g_0h_n|| < C_{g_0}$. If $D = R[{1 \over a}]\T$, we instead apply Lemma \ref{cp2.6} and subtract an element of $R$ from $h_n$ to assume that $||h_n|| < C_1$, where $C_1$ is the constant defined there.

Write $h = \sum_{i = 0}^\infty h_it^i \in R[{1 \over a}][[t]]$. Then the $n$-th coefficient of $f-gh$ is $b_n-g_0h_n \in R$, hence $f - gh \in R[[t]]$. If $D = R[{1 \over a}][[t]]$ then $h \in R[{1 \over a}][[t]] = D$ and the coefficients of $f-gh$ are bounded, hence $f-gh \in R\T \cont A$ (by Remark \ref{cp2.4}). If $D = R[{1 \over a}]\T$ then the coefficients of $h$ are bounded, hence $h \in R[{1 \over a}]\T = D$. Since also $f,g \in D$, we conclude that $f - gh \in D \cap R[[t]] = R\T \cont A$. 
\end{proof}

\begin{Corollary}\label{cp2.9} 
Let $0 \neq a \in R$ and let $D$ be either $R[{1 \over a}]\T$ or $R[{1 \over a}][[t]]$. Let $Q$ be the localization of $D$ by the multiplicative subset $R\T \hefresh \{0\}$. Then $Q$ is a field. Equivalently, for each $g \in D$ there exists $0 \neq h \in D$ such that $gh \in R\T$.
\end{Corollary}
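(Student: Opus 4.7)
The plan is to reduce the corollary to Proposition \ref{cp2.8} by applying the Weierstrass-type division to the element $f = 1$.

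First I would verify the ``equivalently'' clause. Both candidates for $D$ are subrings of $\tilde{\bbQ}[[t]]$, hence integral domains. The multiplicative set $S = R\T \setminus \{0\}$ consists of nonzero elements of $D$, so the localization $Q = S^{-1}D$ embeds into the fraction field of $D$ and is itself a domain. A localization of a domain at a multiplicative set is a field precisely when every nonzero element of the base ring divides some element of $S$; that is, when for every $0 \neq g \in D$ there exists $h \in D$ with $gh \in R\T \setminus \{0\}$. Because $D$ is a domain, the condition $gh \neq 0$ is automatic as soon as $h \neq 0$, which gives the stated reformulation.

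For the existential claim, given $0 \neq g \in D$, I would apply Proposition \ref{cp2.8} to the element $f = 1 \in D$. This yields $\alpha \in A = R\T[{1 \over a}]$ and $k \in D$ satisfying $1 = \alpha + g(1+tk)$. Setting $h_0 = 1 + tk \in D$, which is nonzero because its constant term is $1$, we obtain $g h_0 = 1 - \alpha \in R\T[{1 \over a}]$. To pass from $A$ to $R\T$, I would clear the denominator: since $A$ is the localization of $R\T$ at the powers of $a$, there exists $N \geq 0$ with $a^N(1-\alpha) \in R\T$. Taking $h := a^N h_0 \in D$, its constant term is $a^N \neq 0$, so $h \neq 0$, and
$$
gh = a^N g h_0 = a^N(1-\alpha) \in R\T,
$$
which is the required element.

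I do not anticipate any substantive obstacle here, as the real content is packaged in Proposition \ref{cp2.8}: Corollary \ref{cp2.9} is essentially the observation that plugging $f=1$ into the decomposition $D = A + g(1+tD)$ exhibits $g$ as a divisor (in $D$) of an element of $A$, and then an inoffensive scaling by a power of $a$ moves that element from $A$ into $R\T$. The only fine points are noticing that both versions of $D$ are domains containing $A$, and that the multiplier $a^N h_0$ does not collapse to zero, both of which are immediate.
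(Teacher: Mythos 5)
Your proof is correct and follows essentially the same route as the paper: the paper applies Proposition \ref{cp2.8} to the constant $f=0$ (obtaining $0=r+gh$ with $h\in 1+tD$, hence $gh=-r\in R\T[\frac{1}{a}]$) where you use $f=1$, and both then clear the denominator by multiplying with a power of $a$. The choice of constant is immaterial, so the two arguments coincide in substance.
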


\begin{proof} 
The equivalence of the two assertions is clear. Let $0 \neq g \in D$. By Proposition \ref{cp2.8}, there exists $h \in 1+tD, r \in R\T[{1 \over a}]$ such that $0 = r+gh$. Then $0\neq h\in D$ and $gh = -r \in R\T[{1 \over a}]$. Replace $h$ with $ha^m$ for a sufficiently large $m\in \bbN$ to get $gh \in R\T$. 
\end{proof}

\subsection{Analytic rings}
We now construct the analytic rings and fields for the patching machinery.
For the rest of this section we fix the following setup:

\begin{Setup}\label{cp2.10} 
Let $I$ be a finite index set. For each $i \in I$ let $a_i$ be a non-invertible element of $R$ such that $a_i,a_j$ are co-prime (as elements of the Dedekind domain $R$) for distinct $i,j \in I$. For each $J \cont I$, set $a_J = \prod_{j \in J} a_j$ (for $J = \emptyset$ put $a_\emptyset = 1$) and let $R_J = R[{1 \over a_J}]$. For each $i \in I$, set $a_i' =a_{I \hefresh \{i\}}, z_i = {a_i' \over a_i}, R_i = R_{I \hefresh \{i\}}$ and $R_i' = R_{\{i\}}$. 
\rembox
\end{Setup}

\begin{Lemma}\label{cp2.11} 
For each $J \cont I$ the ring $R_J$ equals $R[z_j \st j \in J]$ (the subring of $K$ generated over $R$ by all $z_j$ with $j \in J$). 
\end{Lemma}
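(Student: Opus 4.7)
The plan is to prove $R_J = R[z_j : j \in J]$ by establishing the two inclusions separately.

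For the easy inclusion $R[z_j : j \in J] \subseteq R_J$: each generator $z_j = a_j'/a_j$ lies in $R[1/a_j]$, since $a_j' \in R$. Because $j \in J$, we have $R[1/a_j] \subseteq R[1/a_J] = R_J$, so the $R$-subalgebra generated by the $z_j$ with $j \in J$ sits inside $R_J$.

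For the reverse inclusion $R_J \subseteq R[z_j : j \in J]$, the key reduction is to show that $1/a_j \in R[z_j]$ for each $j \in J$; once this is done, $1/a_J = \prod_{j \in J} 1/a_j$ lies in the subalgebra $R[z_j : j \in J]$, and hence so does all of $R_J = R[1/a_J]$. To prove $1/a_j \in R[z_j]$, I would first verify that $a_j$ and $a_j'$ are coprime in $R$. By hypothesis the $a_i$ are pairwise coprime, so for any maximal ideal $\mathfrak{m}$ of $R$ and any $k \neq j$, $\mathfrak{m}$ does not contain both $a_j$ and $a_k$; hence $\mathfrak{m}$ cannot contain both $a_j$ and the product $a_j' = \prod_{k \neq j} a_k$, as containment of the product in the prime $\mathfrak{m}$ would force containment of some factor $a_k$. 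Since $R$ is a Dedekind domain, the absence of a common maximal-ideal divisor is equivalent to $(a_j) + (a_j') = R$, yielding $x, y \in R$ with $x a_j + y a_j' = 1$. Dividing through by $a_j$ gives $x + y z_j = 1/a_j$, placing $1/a_j$ in $R[z_j]$ as needed.

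The only substantive content is the Dedekind-domain step: extracting coprimality of $a_j$ with the full product $a_j'$ from the pairwise coprimality hypothesis, and then applying Bezout at the ideal level. Everything else is direct algebraic manipulation, so I do not expect any serious obstacle.
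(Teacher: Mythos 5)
Your proof is correct, but it takes a different route from the paper's. The paper argues structurally: it invokes the fact that an overring of a Dedekind domain is again Dedekind (citing \cite[Proposition 2.4.7]{FJ}), hence equals the intersection of the valuation rings $R_v$ of $K$ containing it, and then checks that $R[z_j \st j\in J]\cont R_v$ precisely when $v(a_j)=0$ for all $j\in J$ (using coprimality to see that $v(a_j)>0$ forces $v(z_j)=-v(a_j)<0$), which identifies the intersection with $R[1/a_J]$. You instead produce an explicit Bezout identity: pairwise coprimality gives $(a_j)+(a_j')=R$ (no maximal ideal can contain $a_j$ and the product $a_j'$ without containing some $a_k$, $k\neq j$), so $xa_j+ya_j'=1$ and hence $1/a_j=x+yz_j\in R[z_j]$, from which $1/a_J=\prod_{j\in J}1/a_j$ lies in $R[z_j\st j\in J]$. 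Your argument is more elementary and self-contained --- it avoids both the overring theorem and the identification of $R[1/a_J]$ as an intersection of valuation rings --- at the cost of being specific to this situation, whereas the paper's valuation-theoretic description of these overrings is reused in the same spirit elsewhere (e.g.\ in Lemmas \ref{cp2.7} and \ref{cp2.12}). Both proofs rest on the same interpretation of ``coprime'' in a Dedekind domain, namely $(a_i)+(a_j)=R$, which is how the paper uses the hypothesis.
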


\begin{proof}
Since $R$ is a Dedekind domain, so is its overring $R[z_j \st j\in J]$, see \cite[Proposition 2.4.7]{FJ},
which therefore equals the intersection of the valuation rings $R_v$ of $K$ lying above it.
Since the $a_i$ are coprime, $R[z_j \st j\in J] \cont R_v$ if and only if $v(a_j) = 0$ for each $j \in J$, equivalently, if and only if $v(a_J) = 0$. Thus $R[z_j \st j\in J] = \bigcap_{v \in \calF \hefresh \calF_{a_J}} R_v = R[{1 \over a_J}] = R_J$.
\end{proof}

\begin{Lemma}\label{cp2.12} 
The intersection $\bigcap_{i \in I} R_i$ equals $R$.
\end{Lemma}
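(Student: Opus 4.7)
The plan is to use the standard Dedekind-domain identity $R=\bigcap_{v\in\calF}R_v$ and exploit coprimality of the $a_i$'s to show that every non-archimedean valuation $v$ of $K$ is ``visible'' in at least one of the $R_i$'s.

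First I would unwind the definitions: by Lemma \ref{cp2.11} we have $R_i=R_{I\hefresh\{i\}}=R[\tfrac{1}{a_i'}]$, and as an overring of a Dedekind domain this can be written as
$$
 R_i=\bigcap_{v\in\calF\hefresh\calF_{a_i'}}R_v=\bigcap_{\substack{v\in\calF \\ v(a_j)=0\;\forall j\neq i}}R_v.
$$
Fix $x\in\bigcap_{i\in I}R_i$. To conclude $x\in R$ it suffices to show $x\in R_v$ for every $v\in\calF$, which reduces the problem to exhibiting, for each $v\in\calF$, an index $i\in I$ with $v(a_j)=0$ for all $j\neq i$.

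The crucial input is the coprimality assumption. Each $v\in\calF$ corresponds to a maximal ideal $\frm_v$ of $R$, and $v(a_j)>0$ is equivalent to $a_j\in\frm_v$. Since $(a_j)+(a_k)=R$ for distinct $j,k\in I$, at most one of the elements $a_j$ can lie in $\frm_v$. So either (a) there is a unique $i\in I$ with $v(a_i)>0$, in which case $v(a_j)=0$ for all $j\neq i$; or (b) $v(a_j)=0$ for all $j\in I$, in which case any $i\in I$ works (noting that $I$ is nonempty or else the statement is trivial). In either case the chosen $i$ satisfies $v\in\calF\hefresh\calF_{a_i'}$, hence $R_i\cont R_v$, and so $x\in R_v$. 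Intersecting over all $v\in\calF$ yields $x\in\bigcap_{v\in\calF}R_v=R$, as required. The reverse inclusion $R\cont\bigcap_i R_i$ is immediate.

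There is really no obstacle here; the only thing to be careful about is the edge case $I=\emptyset$ (where the intersection is usually understood to be the ambient field $K$, not $R$), but Setup \ref{cp2.10} is implicitly applied with $I\neq\emptyset$ in the sequel, so this can be noted in passing or excluded. Everything else is a direct application of Lemma \ref{cp2.11} together with the coprimality condition.
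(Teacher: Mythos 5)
Your proof is correct and follows essentially the same route as the paper: both arguments reduce to showing that for each $v\in\calF$ the coprimality of the $a_j$ forces $v(a_j')=0$ for some $j$, whence $\bigcap_i R_i\cont R_v$, and then intersect over all $v$. The paper phrases the case split in terms of whether some $v(a_i')>0$ rather than in terms of which single $a_i$ lies in $\frm_v$, but this is only a cosmetic difference.
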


\begin{proof}
Let $v \in \calF$. If there exists $i \in I$ such that $v(a_i')  > 0$ then $v(a_j) > 0$ for some $j \neq i$, hence (since $a_k$ is co-prime to $a_j$ for each $k \neq j$) we have $v(a_j') = 0$, which implies that $\bigcap_{k \in I} R_k \cont R_j = R[{1 \over a_j'}] \cont R_v$. If $v(a_i') = 0$ for all $i \in I$, then also $\bigcap_{k \in I} R_k \cont R_v$. Thus $\bigcap_{k \in I} R_k \cont \bigcap_{v \in \calF} R_v = R$. 
\end{proof}

\begin{Proposition}\label{cp2.13} 
For each $i \in I$ and $y \in R_I$ there exist $y_i \in R_i$ and $y'_i \in R_i'$ such that $y = y_i + y_i'$. 
\end{Proposition}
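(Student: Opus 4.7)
The plan is to exploit coprimality of $a_i$ and $a_i'$ in the Dedekind domain $R$ and to use an explicit Bezout relation to split an arbitrary denominator.

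First I would observe that since $a_i$ is coprime to each $a_j$ with $j\neq i$ (as elements of $R$, hence the ideals $(a_i)$ and $(a_j)$ satisfy $(a_i)+(a_j)=R$), the ideals $(a_i)$ and $(a_i')=(\prod_{j\neq i}a_j)$ are themselves coprime: expanding the product $\prod_{j\neq i}(u_j a_i+v_j a_j)=1$ of the Bezout relations $u_j a_i+v_j a_j=1$, every term except $\prod_j v_j\cdot a_i'$ lies in $(a_i)$, so $1\in(a_i)+(a_i')$. Consequently, for every $n\geq 1$ we also have $(a_i^n)+((a_i')^n)=R$, i.e.~there exist $u,v\in R$ (depending on $n$) with
$$
 1 \;=\; u\,a_i^n \,+\, v\,(a_i')^n.
$$

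Next, given $y\in R_I=R[\tfrac{1}{a_I}]$, I would write $y=c/a_I^n=c/(a_i a_i')^n$ for some $c\in R$ and $n\geq 0$. Substituting the Bezout identity yields
$$
 y \;=\; \frac{c\bigl(u\,a_i^n+v\,(a_i')^n\bigr)}{(a_i a_i')^n} \;=\; \frac{cv}{a_i^n} \,+\, \frac{cu}{(a_i')^n}.
$$
Here $cu/(a_i')^n\in R[\tfrac{1}{a_i'}]=R_i$ and $cv/a_i^n\in R[\tfrac{1}{a_i}]=R_i'$, so setting $y_i=cu/(a_i')^n$ and $y_i'=cv/a_i^n$ gives the required decomposition $y=y_i+y_i'$.

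There is essentially no obstacle: the whole content lies in the coprimality step, and once one has $(a_i^n)+((a_i')^n)=R$ the decomposition is a one-line computation. The coprimality itself is a standard fact about Dedekind domains (it also follows immediately from localizing and using that no maximal ideal contains both $a_i$ and $a_i'$, since each maximal ideal contains at most one of the $a_j$), so the only minor point is to spell out why pairwise coprimality of the $a_j$ implies coprimality of $a_i$ and the product $a_i'$.
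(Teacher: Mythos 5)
Your proof is correct, but it takes a genuinely different route from the paper's. The paper first establishes (Lemma \ref{cp2.11}) that $R_J=R[z_j \st j\in J]$ with $z_j=a_j'/a_j$, reduces $y$ to a monomial $b\prod_j z_j^{e_j}$, and repeatedly applies the identity $z_jz_k\in R$ (for $j\neq k$) to kill all but two exponents, after which the surviving monomial visibly lies in $R_i$ or in $R_i'$ according to which exponent dominates. You instead read ``coprime in the Dedekind domain $R$'' as comaximality $(a_i)+(a_j)=R$ --- which is the correct reading, consistent with how the paper uses coprimality in Lemmas \ref{cp2.11} and \ref{cp2.12} (no maximal ideal contains two distinct $a_j$'s) --- deduce $(a_i^n)+((a_i')^n)=R$, and split $c/(a_ia_i')^n$ by partial fractions. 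This is the standard fact that $R[\tfrac{1}{ab}]=R[\tfrac1a]+R[\tfrac1b]$ for comaximal $a,b$, and it is shorter and more elementary: it bypasses Lemma \ref{cp2.11} entirely (which in the paper serves essentially only to feed this proposition). What the paper's route buys is the explicit generator description $R_J=R[z_j\st j\in J]$ and the concrete monomial form of the summands; what yours buys is brevity and independence from the $z_j$ formalism. The only steps you leave slightly implicit --- that pairwise comaximality passes to the product $a_i'$ and then to $n$-th powers --- are standard and you indicate the arguments, so there is no gap.
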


\begin{proof}
Let $y \in R_I$. By Lemma \ref{cp2.11} we have $R_I = R[z_j \st j \in I], R_i = R[z_j \st j \neq i]$ and $R_i' = R[z_i]$. Thus without loss of generality we may assume that $y$ is a monomial of the form $b\prod_{j \in I} z_j^{e_j}$ with $b \in R$ and $e_j$ a non-negative integer for each $j \in I$. We list the elements of $I$ as $\{i_1,\ldots,i_n\}$ such that $i = i_1$ and $e_{i_2} \leq e_{i_3} \leq \ldots \leq e_{i_n}$. Note that for distinct $j, k \in I$ we have

$$z_j \cdot z_k = {\prod_{l \in I}{a_l} \over a_j^2} \cdot {\prod_{l \in I}{a_l} \over a_k^2} = {\prod_{l \in I}a_l^2 \over a_j^2 a_k^2} = \prod_{l\neq j,k}a^2_l\in R.$$ Thus, we have $$z_{i_2}^{e_{i_2}} \cdot z_{i_3}^{e_{i_3}} = (z_{i_2} \cdot z_{i_3})^{e_{i_2}} \cdot z_{i_3}^{e_{i_3} - e_{i_2}}= c \cdot z_{i_3}^{e_{i_3} - e_{i_2}}$$ for some $c \in R$. Replace $b$ with $b\cdot c$ and $e_{i_3}$ with $e_{i_3} - e_{i_2}$ to assume that $e_{i_2} = 0$. Proceeding by induction we may assume that $e_{i_2} = e_{i_3} = \ldots = e_{i_{n-1}} = 0$. So $y = b \cdot z_{i_1}^{e_{i_1}} \cdot z_{i_n}^{e_{i_n}}$. If $e_{i_n} \geq e_{i_1}$ then $y \in R[z_{i_n}] \subseteq R[z_j \st j \neq {i_1}] = R[z_j \st j \neq {i}] = R_i$, and if $e_{i_n} < e_{i_1}$ then $y \in R[z_{i_1}] = R[z_i] = R_i'$, as needed.
\end{proof}

For the rest of this section, set $D = R\T$ and $E = \Quot(D)$. We wish to construct ``analytic fields" over $E$. 

\begin{Construction}\label{cp2.14} 
Suppose the index set $I$ contains $1$ (as a symbol) and let $J \cont I$. 
If $1 \notin J$, put $D_J = R_{J}\T$ and if $1 \in J$ put $D_J = R_J[[t]]$. 
For each $i \in I$, let $D_i = D_{I \hefresh \{i\}}$ and $D_i' = D_{\{i\}}$. We view all these rings as contained in the common ring $D_I = R_I[[t]]$. 
Note that 
$$
 D_\emptyset = R\T = D,
$$
$$
 D_1 = D_{I \hefresh \{1\}} = R_1\T,\quad D_1' = D_{\{1\}} = R_1'[[t]]
$$ 
and 
$$
 D_i = D_{I \hefresh \{i\}} = R_i[[t]],\quad D_i' = D_{\{i\}} = R_i'\T
$$ 
for each $i \neq 1$.

$$ 
\begin{xy}
\xymatrix@=10pt{& D_i \ar@{-}[rr] \ar@{-}[dd] \ar@{-}[dl] && D_I \ar@{-}[dd] \ar@{-}[dl]  \\
R_i \ar@{-}[rr] \ar@{-}[dd] && R_I  \ar@{-}[dd]\\
& D \ar@{-}[rr] \ar@{-}[dl] && D_i' \ar@{-}[dl] \\
R \ar@{-}[rr] && R_i'}
\end{xy}
$$

Let $Q_J=\Quot(D_J)$, and note that $Q_\emptyset = \Quot(R\T) = E$.
Let $Q = \Quot(D_I)$ and for each $i \in I$ put $Q_i = Q_{I\hefresh \{i\}}=\Quot(D_i)$, $Q'_i = \bigcap_{j \neq i} Q_j$. Note that $E,D'_i \subseteq Q'_i$ for each $i \in I$, since $D_i' \subseteq D_j$ for each $i \neq j \in I$.

$$ 
\begin{xy}
\xymatrix@=10pt{& Q_i \ar@{-}[rr] \ar@{-}[dd] \ar@{-}[dl] && Q \ar@{-}[dd] \ar@{-}[dl] \\
D_i \ar@{-}[rr] \ar@{-}[dd] && D_I \ar@{-}[dd]\\
& E \ar@{-}[rr] \ar@{-}[dl] && Q_i' \ar@{-}[dl] \\
D \ar@{-}[rr] && D_i'}
\end{xy}
$$
\rembox
\end{Construction}

For the rest of this section we fix the notation of Construction \ref{cp2.14}.

\begin{Proposition}\label{cp2.15} 
The intersection $\bigcap_{i \in I} Q_i$ equals $E$. 
\end{Proposition}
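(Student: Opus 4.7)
The plan is two-step: first establish the ring-level intersection $\bigcap_{i \in I} D_i = D$, and then bootstrap this to the quotient fields by using Corollary \ref{cp2.9} to clear each denominator into $R\T$. The inclusion $E \subseteq \bigcap_i Q_i$ is immediate from $R\T \subseteq D_i$ for every $i$, so all of the content lies in the reverse inclusion.

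For $\bigcap_{i \in I} D_i = D$, I would take $f = \sum_{n \geq 0} a_n t^n \in D_I = R_I[[t]]$ lying in every $D_i$. For each $i \neq 1$, membership in $D_i = R_i[[t]]$ forces $a_n \in R_i$ for all $n$, while membership in $D_1 = R_1\T$ forces $a_n \in R_1$ for all $n$ together with the growth bound $\limsup_{n \to \infty} ||a_n||^{1/n} \leq 1$. Lemma \ref{cp2.12} then gives $a_n \in \bigcap_{i \in I} R_i = R$, and the growth condition inherited from $D_1$ places $f$ in $R\T = D$ by Remark \ref{cp2.4}.

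For the main claim, take $f \in \bigcap_{i \in I} Q_i$ and write $f = p_i/q_i$ with $0 \neq q_i \in D_i$. Each $D_i$ has the shape $R[{1 \over a_i'}]\T$ or $R[{1 \over a_i'}][[t]]$ with $0 \neq a_i' \in R$, so Corollary \ref{cp2.9} produces a non-zero $h_i \in D_i$ for which $s_i := q_i h_i$ lies in $R\T \setminus \{0\}$. Setting $P_i := p_i h_i \in D_i$ we obtain $f s_i = P_i$ for each $i$. Let $s := \prod_{i \in I} s_i \in R\T \setminus \{0\}$. For each fixed $i$, the factors $s_j$ with $j \neq i$ lie in $R\T \subseteq D_i$, so
\[
fs \;=\; P_i \cdot \prod_{j \neq i} s_j \;\in\; D_i.
\]
Hence $fs \in \bigcap_{i} D_i = R\T$ by the first step, and $f = (fs)/s \in \Quot(R\T) = E$.

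The main obstacle is the ring-level identity $\bigcap_i D_i = R\T$: it depends crucially on Lemma \ref{cp2.12} being applicable coefficient by coefficient, together with the presence of at least one factor $D_1$ that contributes the analytic growth condition preventing the intersection from degenerating to $R[[t]]$. Once that is in place, Corollary \ref{cp2.9} supplies denominators that already live in $R\T$, and simply multiplying them together yields a common denominator that reduces the quotient-field identity to the ring-level one.
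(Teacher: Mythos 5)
Your proposal is correct and follows essentially the same route as the paper: clear denominators into $R\T$ via Corollary \ref{cp2.9}, multiply them together, and reduce to the ring-level identity $\bigcap_{i\in I}D_i=R\T$, which is obtained coefficient-wise from Lemma \ref{cp2.12} together with the growth condition supplied by the factor $D_1\subseteq K\T$. The only difference is presentational: you isolate $\bigcap_i D_i=R\T$ as a separate first step, whereas the paper carries out the same computation inline.
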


\begin{proof} 
Let $y \in \bigcap_{i\in I} Q_i$. For each $i \in I$, by Corollary \ref{cp2.9} there exists $p_i \in R\T$ such that $p_iy \in D_i$. Put $p = \prod_{i \in I}p_i$. Then $py \in \bigcap_{i \in I} R_i[[t]]$ and $py \in D_1 \cont K\T$. It follows from Lemma \ref{cp2.12} that $py \in R\T$, hence $y = {py \over p} \in \Quot(R\T) = E$.
\end{proof}

For any domain $R$ we denote the $t$-adic absolute value on $R[[t]]$ by $|\cdot|_t$. That is, we let $|f|_t = e^{-v_t(f)}$, where $v_t$ is the (normalized) $t$-adic valuation on $R[[t]]$.
Recall that $C_1$ is the constant $C_g$ from Lemma~\ref{cp2.6} for $g=1$.

\begin{Proposition}\label{cp2.16} 
For each $f \in D_I$ and $i \in I$ there exist $g = \sum_{n = 0}^\infty g_n t^n$ $\in D_i',h = \sum_{n = 0}^\infty h_n t^n\in D_i$ such that $f = g + h$ and $|g|_t \leq |f|_t$ and $|h|_t \leq |f|_t$. Moreover, if $i = 1$ then $||h_n|| < C_1$ and if $i \neq 1$ then $||g_n|| < C_1$, for all $n \geq 0$. 
\end{Proposition}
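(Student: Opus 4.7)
The plan is to decompose $f$ coefficient by coefficient using Proposition \ref{cp2.13}, then shift each piece by an element of $R$ (which lies in both $R_i$ and $R_i'$) to obtain the required uniform norm bound. Write $f=\sum_{n=0}^\infty f_n t^n\in R_I[[t]]$, let $N=v_t(f)$, so $|f|_t=e^{-N}$ and $f_n=0$ for $n<N$. For these indices, simply set $g_n=h_n:=0$. For $n\geq N$, apply Proposition \ref{cp2.13} to the coefficient $f_n\in R_I$ to produce a pair $(h_n^0,g_n^0)$ with $h_n^0\in R_i$, $g_n^0\in R_i'$, and $f_n=h_n^0+g_n^0$.

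Next, use the inclusions $R\cont R_i$ and $R\cont R_i'$: for any $r\in R$, the shifted pair $(h_n^0-r,\,g_n^0+r)$ is still a valid decomposition of $f_n$ of the same type. Apply Lemma \ref{cp2.6} with $g=1$ to control the appropriate piece. In the case $i=1$, find $r_n\in R$ with $||h_n^0-r_n||<C_1$ and set $h_n:=h_n^0-r_n$, $g_n:=g_n^0+r_n$. In the case $i\neq 1$, find $r_n\in R$ with $||g_n^0-r_n||<C_1$ and set $g_n:=g_n^0-r_n$, $h_n:=h_n^0+r_n$.

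Finally, set $g=\sum_{n\geq 0}g_n t^n$ and $h=\sum_{n\geq 0}h_n t^n$. Then $f=g+h$ by construction, and $g_n=h_n=0$ for $n<N$, giving $|g|_t\leq|f|_t$ and $|h|_t\leq|f|_t$. All $h_n$ lie in $R_i$ and all $g_n$ in $R_i'$. In the case $i=1$, the uniform bound $||h_n||<C_1$ together with Remark \ref{cp2.4} yields $h\in R_1\T=D_1$, and $g\in R_1'[[t]]=D_1'$ is automatic. In the case $i\neq 1$, the bound $||g_n||<C_1$ and Remark \ref{cp2.4} give $g\in R_i'\T=D_i'$, while $h\in R_i[[t]]=D_i$ needs no further argument. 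The central step is Proposition \ref{cp2.13}, and the only subtlety is noticing that the $R$-shift freedom allows Lemma \ref{cp2.6} to tame the coefficient whose size matters for convergence; I do not anticipate any real obstacle beyond assembling these ingredients.
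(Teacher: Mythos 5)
Your proposal is correct and follows essentially the same route as the paper: coefficient-wise decomposition via Proposition \ref{cp2.13}, followed by an $R$-shift controlled by Lemma \ref{cp2.6} (with $g=1$) to bound the coefficients of the piece that needs to lie in a $\T$-ring, and then Remark \ref{cp2.4} to conclude convergence. No gaps.
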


\begin{proof}
Let $f = \sum_{n = m}^\infty f_nt^n \in D_I$ with $f_m \neq 0$. For each $n \geq m$, Proposition \ref{cp2.13} gives $g_n \in R_i', h_n \in R_i$ such that $f_n = g_n+h_n$. By Lemma \ref{cp2.6} we may add an element of $R$ to $g_n$ and subtract it from $h_n$ to assume that: If $i = 1$ then $||h_n|| < C_1$, and if $i \neq 1$ then $||g_n|| < C_1$. Then $g = \sum_{n = m}^\infty g_n t^n \in R_i'[[t]], h = \sum_{n = m}^\infty h_n \in R_i[[t]]$ satisfy $f = g+h$. Moreover, if $i = 1$, then $h \in R_i\T = D_i$, $g \in R_i'[[t]] = D_i'$, and if $i \neq 1$ then $g \in R_i'\T = D_i'$, $h \in R_i[[t]] = D_i$.
Clearly, $|g|_t \leq |f|_t$, $|h|_t \leq |f|_t$. 
\end{proof}

\begin{Construction}\label{cp2.17} 
Consider the case where $K$ is a finite Galois extension of a subfield $K_0$ with $a_1,a_I \in K_0^\times$, and suppose there is an action of $\Gamma = \Gal(K/K_0)$ on the set $I$, such that $a_i^\gamma = a_{i^\gamma}$ for all $i \in I, \gamma \in \Gamma$. Since $R$ is the ring of integers of $K$, $\Gamma$ acts on $R$. Since $a_I \in K_0$ we have $a_I^\gamma = a_I$ for all $\gamma \in \Gamma$, hence $\Gamma$ acts on $R_I = R[{1 \over a_I}]$. For each $J \cont I$ and $\gamma \in \Gamma$ we have $R_J^\gamma = R[{1 \over a_J^\gamma}] = R[{1 \over a_{J^\gamma}}] = R_{J^\gamma}$. 

The action of $\Gamma$ on $R_I$ extends to an action of $\Gamma$ on $R_I[[t]]$ (coefficient-wise). Let $J$ be a subset of $I$ and let $\gamma \in \Gamma$. If $1 \in J$, then also $1 = 1^\gamma \in J^\gamma$, hence $D_J^\gamma = R_J[[t]]^\gamma = (R_J)^\gamma[[t]] = R_{J^\gamma}[[t]] = D_{J^\gamma}$. Suppose $1 \notin J$. Then $1 = 1^\gamma \notin J^\gamma$. If $f \in D_J$, then $\sigma(f) \in \bbC_{1^-}[[t]]$ for all $\sigma \in \Hom(K,\bbC)$. 
Since $\sigma(f^\gamma)=(\sigma\circ\gamma)(f)$ and $\sigma\circ\gamma\in\Hom(K,\bbC)$, 
$f^\gamma \in R_{J^\gamma}[[t]]$ also satisfies $\sigma(f^\gamma) \in \bbC_{1^-}[[t]]$ for all $\sigma \in \Hom(K,\bbC)$. 
Thus in this case as well we have $D_J^\gamma = D_{J^\gamma}$.

Thus the action of $\Gamma$ extends to $Q = \Quot(D_I)$ (by $({f \over g})^\gamma = {f^\gamma \over g^\gamma})$, satisfying $Q_i^\gamma = Q_{i^\gamma},(Q'_i)^\gamma = Q'_{i^\gamma} $ for all $i \in I, \gamma \in \Gamma$. Note also that $E^\gamma = E$ for each $\gamma \in \Gamma$, since $R^\gamma = R$.
\rembox
\end{Construction}

\section{Matrix factorization}

We now combine the additive decomposition $D_I=D_i+D_i'$ of Proposition \ref{cp2.16} with
the ``Weierstrass division" of Proposition \ref{cp2.8} and some completeness arguments
to get a matrix factorization result for the quotient fields of our analytic rings (Corollary \ref{cp3.5}).

\begin{Remark}\label{cp3.1} 
Let $D$ be a domain, equipped with a norm $|\cdot|$. 
For each $n \geq 1$, $|\cdot|$ extends to a metric on the ring of $n\times n$-matrices $\Mat_n(D)$, by $|(a_{ij})_{1 \leq i,j \leq n}| = \max_{i,j}|a_{ij}|$, satisfying $|a+b| \leq |a|+|b|$ and $|ab| \leq n|a||b|$ (and if $|\cdot|$ is ultrametric, then $|a+b| \leq \max\{|a|,|b|\}, |ab| \leq |a||b|$).
Clearly, $D$ is complete w.r.t. $|\cdot|$ if and only if $\Mat_n(D)$ is complete for each $n \geq 1$.
\rembox
\end{Remark}

\begin{Proposition}\label{cp3.2} 
Let $A$ be a domain equipped with an ultrametric absolute value $|\cdot|$ and let $A_0$ be a dense subring, such that $A = A_0 + gA$ for each $0 \neq g \in A_0$. Put $E = \Quot(A), E_0 = \Quot(A_0)$ and let $E_1,E_2$ be subfields of $E$ such that $E_0 \cont E_2$. Let $n \in \bbN$ and suppose that:
\begin{enumerate}
\item[(i)] The localization $(A_0 \hefresh \{0\})^{-1}A$ equals $E$.
\item[(ii)] For for each $b \in \Mat_n(A)$ with $|b-\mathbbm{1}| < 1$  (where $\mathbbm{1}$ is the identity matrix of rank $n$) there exist $b_1 \in \GL_n(E_1), b_2 \in \GL_n(E_2)$ such that $b = b_1 b_2$.
\end{enumerate}
\noindent Then $\GL_n(E) = \GL_n(E_1)\cdot \GL_n(E_2)$. 
\end{Proposition}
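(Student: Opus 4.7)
The plan is a two-stage reduction to hypothesis (ii): first clear denominators to land in $\Mat_n(A)$, then use density of $A_0$ in $A$ together with the Weierstrass-style decomposition $A = A_0 + gA$ to produce a right factor $c \in \GL_n(E_2)$ making $bc^{-1}$ close to $\mathbbm{1}$ in $\Mat_n(A)$. Applying (ii) to $bc^{-1}$ then delivers the desired factorization of $b$.

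For the first stage, (i) writes each entry of $b \in \GL_n(E)$ as $x/y$ with $x \in A$, $y \in A_0 \setminus \{0\}$, so clearing a common denominator gives some $a \in A_0 \setminus \{0\}$ with $ab \in \Mat_n(A) \cap \GL_n(E)$. Since $a\mathbbm{1} \in \GL_n(E_0) \subseteq \GL_n(E_2)$, any factorization $ab = b_1 b_2'$ with $b_1 \in \GL_n(E_1)$ and $b_2' \in \GL_n(E_2)$ yields $b = b_1 \cdot (b_2'(a\mathbbm{1})^{-1})$ of the required form, with second factor in $\GL_n(E_2)$. Hence we may assume $b \in \Mat_n(A) \cap \GL_n(E)$.

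For the second stage, I seek $c \in \GL_n(E_2)$ with $bc^{-1} \in \Mat_n(A)$ and $|bc^{-1} - \mathbbm{1}| < 1$; then (ii) gives $bc^{-1} = b_1 b_2'$ and hence $b = b_1 \cdot (b_2' c) \in \GL_n(E_1) \cdot \GL_n(E_2)$. To construct $c$, I first use density of $A_0$ together with continuity of $\det$ to find $b^{(0)} \in \Mat_n(A_0)$ approximating $b$ closely enough that $\det b^{(0)} \neq 0$, and set $g := \det b^{(0)} \in A_0 \setminus \{0\}$, so that $b^{(0)} \in \GL_n(E_0) \subseteq \GL_n(E_2)$. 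Then I apply the hypothesis $A = A_0 + gA$ entrywise to $b$, using density to shrink the $gA$-part, and write $b = c + gR$ with $c \in \Mat_n(A_0)$ ultrametrically close to $b^{(0)}$ (hence in $\GL_n(E_0) \subseteq \GL_n(E_2)$) and $R \in \Mat_n(A)$ of arbitrarily small norm.

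The crux---and the main obstacle---is to verify that $bc^{-1} - \mathbbm{1} = gRc^{-1} = R \cdot (g/\det c) \cdot \mathrm{adj}(c)$ actually lies in $\Mat_n(A)$, which, since $\mathrm{adj}(c) \in \Mat_n(A_0) \subseteq \Mat_n(A)$, reduces to showing $g/\det c \in A$. Expanding $\det c = \det(b - gR)$ by the Leibniz formula shows $\det c = \det(b) + gQ$ with $Q \in A$; arranging $b^{(0)}$ close enough to $b$ so that $\det(b) - g \in gA$ (which can be achieved by applying $A = A_0 + gA$ to $\det b$ itself and choosing the approximation compatibly) then gives $\det c = g(1 + \eta)$ with $\eta \in A$ and $|\eta| < 1$. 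Completeness of $A$---inherited by $\Mat_n(A)$ as in Remark \ref{cp3.1}---then makes $1 + \eta$ a unit of $A$ via the geometric series, so $g/\det c \in A$ as required. The norm estimate $|bc^{-1} - \mathbbm{1}| \leq n |R| \cdot |g/\det c| \cdot |\mathrm{adj}(c)|$ can be made $<1$ by choosing $|R|$ small enough, completing the reduction to (ii).
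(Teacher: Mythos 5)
Your overall architecture matches the paper's: reduce via (i) to $b\in\Mat_n(A)$, then find a right factor in $\GL_n(E_0)\subseteq\GL_n(E_2)$ that brings $b$ within distance $1$ of $\mathbbm{1}$ inside $\Mat_n(A)$, and apply (ii). But the step you yourself flag as the crux has a genuine gap. You need $g/\det c\in A$, and you propose to get it by ``arranging $\det(b)-g\in gA$'' with $g=\det b^{(0)}$, justified by ``applying $A=A_0+gA$ to $\det b$.'' That decomposition only yields $\det b\in A_0+gA$, i.e.\ $\det b=a_0+g\alpha$ with $a_0\in A_0$; it gives no control whatsoever forcing $a_0=g$, and there is no reason that $g=\det b^{(0)}$ should divide $\det b$ in $A$ (with $1$-unit quotient, no less). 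In an ultrametric domain, metric proximity of $\det b^{(0)}$ to $\det b$ says nothing about divisibility: in the intended application $A=R_I[[t]]$ with $|\cdot|_t$, take $\det b=2+t$ and $\det b^{(0)}=2+t+5t^{100}$; then $\det b-g=-5t^{100}\notin gA$. Without $g/\det c\in A$ you cannot conclude $bc^{-1}\in\Mat_n(A)$, so (ii) does not apply. (A secondary issue: your geometric-series inversion of $1+\eta$ uses completeness of $A$, which is not among the hypotheses of the proposition; the paper's argument never needs it.)

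The missing ingredient is a second use of hypothesis (i), which is exactly how the paper arranges the divisibility. Writing $1/\det(b)=f/g$ with $f\in A$ and $0\neq g\in A_0$ gives $b\cdot\bigl(f\,\mathrm{adj}(b)\bigr)=g\mathbbm{1}$, i.e.\ $gb^{-1}\in\Mat_n(A)$; the paper then approximates this scaled \emph{inverse} (working inside $V=\{a'\in\Mat_n(A):ba'\in g\Mat_n(A)\}$, where $A=A_0+gA$ and density give $V_0=V\cap\Mat_n(A_0)$ dense in $V$), rather than approximating $b$ itself and inverting the approximation. Your route can be repaired along the same lines: choose $g$ by (i) \emph{before} decomposing, so that $g=f\det b$; then $\det c=\det b+gQ=\det b(1+fQ)$ with $|fQ|<1$ for $|R|$ small, whence $g/\det c=f(1+fQ)^{-1}\in A$ --- but this still requires inverting $1+fQ$ in $A$, so the paper's approximation-of-the-inverse argument is both closer to the hypotheses as stated and cleaner. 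As written, your proof does not go through.
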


\begin{proof}
Let $b \in \GL_n(E)$. By (i) there exists $0 \neq h \in A_0$ such that $hb \in \Mat_n(A)$. If $hb = b_1b_2'$ with $b_1 \in \GL_n(E_1)$, $b_2' \in \GL_n(E_2)$, then $b = b_1 b_2$, where $b_2 = {1 \over h}b_2' \in \GL_n(E_2)$. So we may assume that $b \in \Mat_n(A)$.
 Let $0 \neq d = \det(b) \in A$. Let $b'' \in \Mat_n(A)$ be the adjugate matrix of $b$, so that $bb'' = d\mathbbm{1}$. By (i) again, there exists $0 \neq f \in A, 0 \neq g \in A_0$ such that ${1 \over d} = {f\over g}$. Set $b' = fb''$. Then $b' \in \Mat_n(A)$ and $bb' = g\mathbbm{1}$. Put
$$
 V = \{a' \in \Mat_n(A) \st ba' \in g\Mat_n(A)\} \;\hbox{ and }\; V_0 = V \cap \Mat_n(A_0).
$$ 
Then $V$ is an additive subgroup of $\Mat_n(A)$ and $g\Mat_n(A) \leq V$. Since $A = A_0 + gA$, we also have $\Mat_n(A) =\Mat_n(A_0) +g \Mat_n(A)$, hence $V = V_0 + g\Mat_n(A)$. Since $A_0$ is dense in $A$, $\Mat_n(A_0)$ is dense in $\Mat_n(A)$, hence $g\Mat_n(A_0)$ is dense in $g\Mat_n(A)$. It follows that $V_0 = V_0 + g\Mat_n(A_0)$ is dense in $V = V_0 +g\Mat_n(A)$. Since $b' \in V$, there exists $a_0 \in V_0$ with $|b' - a_0| < {|g| \over |b|}$. In particular, $a_0 \in \Mat_n(A_0)$ and $ba_0 \in g \Mat_n(A)$.

 Put $a = {1 \over g}a_0 \in \Mat_n(E_0)$. Then $ba = {1 \over g}ba_0\in\Mat_n(A)$ and $|\mathbbm{1} - ba| = |{1 \over g}b(b'-a_0)| \leq {1 \over |g|}|b||b'-a_0| < 1$. By (ii), there exist $b_1 \in \GL_n(E_1)$, $b_2' \in \GL_n(E_2)$ such that $b a = b_1b_2'$. In particular, $\det(a) \neq 0$, hence $a \in \GL_n(E_0)$. Thus $b =b_1b_2$ for $b_2 = b_2'a^{-1} \in \GL_n(E_2) \GL_n(E_0) = \GL_n(E_2)$.
\end{proof}

\begin{Lemma}\label{cp3.3} 
Let $D$ be a domain complete with respect to a norm $|\cdot|$. Let $(a_k)_{k=1}^\infty$ be a sequence of matrices in $\Mat_n(D)$ such that $\sum_{k = 1}^\infty |a_k| < \infty$. Then the infinite products $\ldots \cdot (\mathbbm{1}+a_2) \cdot (\mathbbm{1}+a_1)$ and $(\mathbbm{1}+a_1)\cdot (\mathbbm{1}+a_2) \cdot \ldots$ both converge in $\Mat_n(D)$. Moreover, if each $\mathbbm{1}+a_k$ is invertible, so is each of the products. 
\end{Lemma}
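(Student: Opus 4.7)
The plan is to replace $|\cdot|$ on $\Mat_n(D)$ by an equivalent \emph{submultiplicative} norm, reduce the claim to the standard absolutely-convergent-series argument for infinite products in a complete normed ring, and then derive invertibility of the limit by applying the same argument to the inverses.

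First, I would fix a submultiplicative norm $\|\cdot\|$ on $\Mat_n(D)$ equivalent to $|\cdot|$---for instance the operator norm induced by the sup norm on $D^n$, which satisfies $|A|\le\|A\|\le n|A|$, $\|\mathbbm{1}\|=1$, and $\|AB\|\le\|A\|\|B\|$. Equivalence of the two norms gives the same notion of convergence, preserves completeness of $\Mat_n(D)$ by Remark~\ref{cp3.1}, and ensures that $\sum_{k}\|a_k\|<\infty$. Setting $P_k=(\mathbbm{1}+a_k)\cdots(\mathbbm{1}+a_1)$, iteration of $\|(\mathbbm{1}+a_j)B\|\le(1+\|a_j\|)\|B\|$ yields the uniform bound
\[
\|P_k\|\le\prod_{j=1}^{k}(1+\|a_j\|)\le\exp\Bigl(\sum_{j\ge 1}\|a_j\|\Bigr)=:M<\infty.
\]
Then $\|P_{k+1}-P_k\|=\|a_{k+1}P_k\|\le M\|a_{k+1}\|$, so $\sum_k\|P_{k+1}-P_k\|<\infty$, and the telescoping identity $P_k=\mathbbm{1}+\sum_{j<k}(P_{j+1}-P_j)$ shows that $(P_k)$ converges in the complete space $\Mat_n(D)$. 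The other ordering is entirely symmetric.

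For the invertibility claim, I would set $b_k:=-(\mathbbm{1}+a_k)^{-1}a_k$, so that $(\mathbbm{1}+a_k)^{-1}=\mathbbm{1}+b_k$. Since $\|a_k\|\to 0$, all but finitely many $k$ satisfy $\|a_k\|\le 1/2$; for such $k$ the geometric series $\sum_{j\ge 0}(-a_k)^j$ converges in $\Mat_n(D)$, agrees with the given $(\mathbbm{1}+a_k)^{-1}$ by uniqueness of inverses, and satisfies $\|(\mathbbm{1}+a_k)^{-1}\|\le 2$; hence $\|b_k\|\le 2\|a_k\|$ and $\sum_k\|b_k\|<\infty$. Applying the convergence argument of the previous paragraph to $(b_k)$, the partial products $Q_k:=(\mathbbm{1}+b_1)\cdots(\mathbbm{1}+b_k)=P_k^{-1}$ converge to some $Q\in\Mat_n(D)$, and continuity of multiplication gives $PQ=\lim_k P_kQ_k=\mathbbm{1}=\lim_k Q_kP_k=QP$, so $P$ is invertible; the reverse product is handled identically. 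The only non-mechanical step in this argument is the initial norm rescaling---without a genuinely submultiplicative norm, iterating the estimate $|(\mathbbm{1}+a_j)B|\le n|\mathbbm{1}+a_j||B|$ from Remark~\ref{cp3.1} accumulates a divergent factor $n^{k-1}$, which is precisely why one first passes to $\|\cdot\|$.
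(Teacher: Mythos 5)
Your argument is correct and is essentially the standard one: the paper itself gives no details here, simply citing the proof of Lemma~2.2 of \cite{HarbaterCAPS}, which runs along the same lines (uniform bound on partial products via $\prod(1+\|a_j\|)$, telescoping to get a Cauchy sequence, and inverses handled by the geometric series). Your explicit attention to replacing $|\cdot|$ by an equivalent submultiplicative norm (e.g.\ the row-sum norm, which is submultiplicative over any normed ring) is a legitimate and worthwhile clarification of a point the paper leaves implicit.
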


\begin{proof}
The proof is verbally the same as the proof of \cite[Lemma 2.2]{HarbaterCAPS} (where the lemma is proven for specific complete normed domains).
\end{proof}

\begin{Remark}\label{rem:limits}
If $(f_k)_{k\in\mathbb{N}}$ is a sequence in $\bbC_r[[t]]$ that converges both with respect to $|\cdot|_t$ in $\bbC[[t]]$
and with respect to $|\cdot|_r$ in $\bbC_r[[t]]$, then the two limits coincide.
Indeed, if $g\in\bbC_r[[t]]$ with $||g-f_k||_r\rightarrow 0$, then, writing $f_k=\sum_{n=0}^\infty f_{kn}t^n$
and $g=\sum_{n=0}^\infty g_nt^n$, the Cauchy integral formula
implies that $|g_n-f_{kn}|\leq ||g-f_k||_rr^{-n}\rightarrow 0$ as $k\rightarrow\infty$.
Since $f_k$ converges also $t$-adically, the sequence $(f_{kn})_{n\in\mathbb{N}}$ is eventually constant for every $k$, and 
therefore eventually equal to $g_k$. Hence, $g$ is also the limit of $(f_k)_{k\in\mathbb{N}}$ with respect to $|\cdot|_t$.
\rembox
\end{Remark}

For the rest of this section we use the notation of Construction \ref{cp2.14}.

\begin{Proposition}\label{cp3.4} 
Let $n \in \bbN$ and let $b \in \Mat_n(D_I)$ satisfy $|b-\mathbbm{1}|_t  < 1$. Then for each $i \in I$ there exist $b_i' \in \GL_n(Q_i')$ and $b_i \in \GL_n(Q_i)$ such that $b = b_i'b_i$. 
\end{Proposition}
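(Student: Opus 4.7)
The strategy is a Cartan-style iterative factorization. Starting from $b_0 = b$, I recursively construct $b_k = \mathbbm{1} + c_k \in \Mat_n(D_I)$ together with factors $g_k, h_k$ close to $\mathbbm{1}$ satisfying $b_k = g_k\, b_{k+1}\, h_k$ and $|c_{k+1}|_t \le |c_k|_t^2$. The infinite products $G = g_0 g_1 g_2 \cdots$ and $H = \cdots h_2 h_1 h_0$ will then give the desired factorization $b = GH$ with $G \in \GL_n(Q_i')$ and $H \in \GL_n(Q_i)$.

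At step $k$, apply Proposition \ref{cp2.16} entry-wise to $c_k$ to obtain a splitting $c_k = c_k^+ + c_k^-$ with $c_k^+ \in \Mat_n(D_i')$, $c_k^- \in \Mat_n(D_i)$, each of $t$-adic valuation at least $v_t(c_k)$. Set $g_k = \mathbbm{1}+c_k^+$ and $h_k = \mathbbm{1}+c_k^-$; since $|c_k^\pm|_t < 1$, both are invertible in $\Mat_n(D_I) = \Mat_n(R_I[[t]])$ via the $t$-adic geometric series. The identity $g_k h_k = b_k + c_k^+ c_k^-$ gives $b_{k+1} := g_k^{-1} b_k h_k^{-1} = \mathbbm{1} - g_k^{-1} c_k^+ c_k^- h_k^{-1}$, so $|c_{k+1}|_t \le |c_k|_t^2$ and inductively $v_t(c_k) \ge 2^k$. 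Telescoping yields $b = (g_0 \cdots g_k) \cdot b_{k+1} \cdot (h_k \cdots h_0)$ for every $k$, and Lemma \ref{cp3.3}, applied $t$-adically, shows that the partial products $G_k = g_0 \cdots g_k$ and $H_k = h_k \cdots h_0$ converge $t$-adically to invertible limits $G, H \in \GL_n(R_I[[t]])$ with $b = GH$.

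It remains to verify that $G \in \GL_n(Q_i')$ and $H \in \GL_n(Q_i)$. On one of the two sides, the relevant ring is already $t$-adically complete: $D_1' = R_1'[[t]]$ when $i = 1$, or $D_i = R_i[[t]]$ when $i \ne 1$. There the $t$-adic limit lands directly in $\GL_n(D_i') \subseteq \GL_n(Q_i')$ or in $\GL_n(D_i) \subseteq \GL_n(Q_i)$, as required. The opposite side is of convergent type (an $R\T$-ring), which is \emph{not} $t$-adically complete, so the $t$-adic limit a priori only lies in $R[\frac{1}{a}][[t]]$; showing that it is still analytic on the open unit disc is the main obstacle.

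For this I use the second clause of Proposition \ref{cp2.16}: on the $\T$-side, every coefficient of $c_k^\pm$ has $K$-norm less than $C_1$, uniformly in $k$. Combined with $v_t(c_k^\pm) \ge 2^k$, this yields, for each $r < 1$ and each $\sigma \in \Hom(K,\bbC)$, the estimate
$$
|\sigma(c_k^\pm)|_r \;\le\; \frac{C_1\, r^{2^k}}{1-r},
$$
which is summable in $k$. A routine induction then shows that $|\sigma(G_k)|_r$ and $|\sigma(H_k)|_r$ remain bounded and that $(\sigma(G_k))_k$ and $(\sigma(H_k))_k$ are Cauchy in the complete space $\bbC_r[[t]]$. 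By Remark \ref{rem:limits}, these $|\cdot|_r$-limits agree coefficient-wise with the $t$-adic limits, so $\sigma$ applied to the $t$-adic limit lies in $\bbC_r[[t]]$ for every $r < 1$ and every $\sigma$, placing it in $R[\frac{1}{a}]\T$ as required. Finally, since $G$ and $H$ are invertible in $\Mat_n(R_I[[t]])$ their determinants are nonzero, so the two factors land in $\GL_n(Q_i')$ and $\GL_n(Q_i)$, respectively, completing the factorization.
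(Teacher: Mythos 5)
Your proposal is correct and is essentially the paper's own argument: the same Cartan-style iteration driven by the additive splitting of Proposition \ref{cp2.16}, with quadratic decay of the error, convergence of the infinite products via Lemma \ref{cp3.3}, and the uniform coefficient bound $C_1$ together with Remark \ref{rem:limits} to show that the $\T$-side limit is genuinely convergent. The only (immaterial) difference is that you use correction factors $(\mathbbm{1}+c_k^\pm)^{-1}$, whereas the paper uses $(\mathbbm{1}-y_j^\pm)$ and thereby avoids matrix inversion in the recursion.
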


\begin{proof}
Let $C_1$ be given by Lemma \ref{cp2.6}. 
By Proposition \ref{cp2.16}, for each $y \in \Mat_n(D_I)$ there exist $y^+ \in \Mat_n(D_i')$ and $y^- \in \Mat_n(D_i)$ such that $y = y^+ + y^-$ and $|y^+|_t \leq |y|_t$, $|y^-|_t \leq |y|_t$. Moreover, if $i = 1$ and $\lambda$ is a coefficient of one of the entries of $y^-$, then $||\lambda|| < C_1$. Similarly, if $i \neq 1$ and $\lambda$ is a coefficient of an entry of $y^+$, then $||\lambda|| < C_1$.

Write $y_1 = b - \mathbbm{1}$ and $c = |y_1|_t < 1$. We recursively define a sequence of matrices $(y_j)_{j=1}^\infty \cont \Mat_n(D_I)$ by setting
\begin{eqnarray}
y_{j+1} = y_j^+y_j^- - y_j^+y_j - y_j y_j^- + y_j^+y_jy_j^-. \label{cf1}
\end{eqnarray}

Since $|\cdot|_t$ is ultrametric it follows by induction that
$|y_{j+1}|_t \leq |y_j|_t^2$ 
and  $|y_j^+|_t\leq |y_j|_t \leq c^{2^{j-1}} < 1$ for each $j \geq 1$. Thus $\det(\mathbbm{1}-y_j^+) \equiv \det(\mathbbm{1}) = 1 \mod (t)$, hence $\mathbbm{1} - y_j^+$ is invertible in the ring $\Mat_n(R_i'[[t]])$ (which is complete with respect to $|\cdot|_t$) for all $j$. Moreover, $\sum_j |y_j^+|_t < \infty$. By Lemma \ref{cp3.3}, $\ldots \cdot (\mathbbm{1}-y_2^+) \cdot (\mathbbm{1}-y_1^+)$ converges to a matrix $p'_i \in \GL_n(R_i'[[t]])$. Similarly, $(\mathbbm{1} - y_1^-) \cdot (\mathbbm{1}-y_2^-) \cdot \ldots$ converges to a matrix $p_i \in \GL_n(R_i[[t]])$. In particular, $\det(p_i),\det(p_i') \neq 0$. By (\ref{cf1}) we have 
$$
 \mathbbm{1}+y_{j+1} = (\mathbbm{1} - y_j^+)(\mathbbm{1}+y_j)(\mathbbm{1}-y_j^-),
$$ 
hence 
$$
 \mathbbm{1}+y_{j+1} = (\mathbbm{1}-y_j^+) \cdot \ldots \cdot (\mathbbm{1}-y_1^+) \cdot b \cdot (\mathbbm{1}-y_1^-) \cdot \ldots \cdot (\mathbbm{1}-y_j^-)
$$ for all $j$. Taking the $t$-adic limit, we get $\mathbbm{1} = p'_i\cdot b \cdot p_i$.

Suppose $i \neq 1$. Since $|y_j^+|_t \leq c^{2^{j-1}} \leq c^j$ for all $j$, it follows that each of the entries of $y_j^+$ is divisible by $t^j$. Let $0 < r < 1$ and suppose $f \in D_i'$ is one of the entries of $y_j^+$. Then we may write $f = t^j(\sum_{m =0}^\infty f_m t^m)$, hence for each $\sigma \in \Hom(K,\bbC)$ we have $|\sigma(f)|_r \leq r^j(\sum_{m = 0}^\infty C_1 r^m) = r^j\cdot{C_1 \over 1 - r}$. It follows that $|\sigma(y_j^+)|_r \leq r^j\cdot{C_1 \over 1 - r}$ (where $\sigma(y_j^+)$ is the matrix obtained by applying $\sigma$ to all of the entries). 
Thus, $\sum_{j = 1}^\infty |\sigma(y_j^+)|_r \leq \sum_{j = 1}^\infty r^j\cdot{C_1 \over 1 - r} = r\cdot{ C_1 \over (1-r)^2} < \infty$. By Lemma \ref{cp3.3}, 
$\ldots \cdot (\mathbbm{1}-\sigma(y_2^+)) \cdot (\mathbbm{1}-\sigma(y_1^+))$ converges w.r.t.~$|\cdot|_r$ to an element of $\Mat_n(\bbC_r[[t]])$, 
which by Remark \ref{rem:limits} must equal $\sigma(p_i')$.
We conclude that $p_i' \in \Mat_n(D_i')$. Note that since $i \neq 1$, $p_i \in \GL_n(R_i[[t]]) = \GL_n(D_i)$.

Similarly, if $i = 1$ we get that $p_i \in \Mat_n(D_i), p_i' \in \GL_n(D_i')$. Thus in both cases, $p_i \in \Mat_n(Q_i), p_i' \in \Mat_n(Q_i')$ and since $\det(p_i) \neq 0, \det(p_i') \neq 0$ we conclude that $p_i \in \GL_n(Q_i),  p_i'\in \GL_n(Q_i')$. Write $b_i = p_i^{-1}, b_i' = (p_i')^{-1}$. Then $b = b'_i \cdot b_i$.
\end{proof}

\begin{Corollary}\label{cp3.5} 
For each $i \in I$ and $n \in \bbN$ we have 
$$
 \GL_n(Q) = \GL_n(Q_i')\cdot \GL_n(Q_i) = \GL_n(Q_i)\cdot \GL_n(Q_i'). 
$$
\end{Corollary}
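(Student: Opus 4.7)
The plan is to derive Corollary \ref{cp3.5} from Proposition \ref{cp3.2} by taking $A = D_I$ equipped with the $t$-adic absolute value $|\cdot|_t$, $A_0 = R\T[\tfrac{1}{a_I}]$, and $\{E_1, E_2\} = \{Q_i', Q_i\}$. Note that $|\cdot|_t$ is indeed an ultrametric absolute value on the domain $D_I = R_I[[t]]$, and $\Quot(A) = Q$ in the notation of Proposition \ref{cp3.2}.

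First I would check that $A_0$ is dense in $A$ with respect to $|\cdot|_t$: every element of $R_I[t]$ can be written, after clearing denominators, as $a_I^{-k} q(t)$ with $q(t) \in R[t] \subseteq R\T$, hence lies in $A_0$, and such polynomials are $t$-adically dense in $R_I[[t]] = D_I$. Next, Proposition \ref{cp2.8} applied with $a = a_I$ and $D = R_I[[t]] = D_I$ yields $D_I = R\T[\tfrac{1}{a_I}] + g(1 + tD_I) \subseteq A_0 + gD_I$, so the condition $A = A_0 + gA$ is satisfied for every nonzero $g \in A_0 \subseteq D_I$. Also, $E_0 := \Quot(A_0) = \Quot(R\T) = E$, and since $D \subseteq D_i$ we have $E \subseteq Q_i$, so $E_0 \subseteq E_2$ holds when we take $E_2 = Q_i$ (and $E_1 = Q_i'$).

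It then remains to verify conditions (i) and (ii) of Proposition \ref{cp3.2}. For (i), any element of $Q = \Quot(D_I)$ has the form $f/g$ with $f, g \in D_I$ and $g \neq 0$; by Corollary \ref{cp2.9} there exists $0 \neq h \in D_I$ with $gh \in R\T \subseteq A_0$, so $f/g = fh/(gh) \in (A_0 \setminus\{0\})^{-1}A$. Condition (ii) is verbatim the conclusion of Proposition \ref{cp3.4}. Applying Proposition \ref{cp3.2} thus yields $\GL_n(Q) = \GL_n(Q_i') \cdot \GL_n(Q_i)$.

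For the reverse factorization $\GL_n(Q) = \GL_n(Q_i) \cdot \GL_n(Q_i')$, given $b \in \GL_n(Q)$ I would apply the factorization just proved to $b^{-1}$ to obtain $b^{-1} = uv$ with $u \in \GL_n(Q_i')$ and $v \in \GL_n(Q_i)$, and then read off $b = v^{-1} u^{-1}$. There is no genuine obstacle here: all of the analytic content sits in Propositions \ref{cp2.8}, \ref{cp3.4} and Corollary \ref{cp2.9}, and the only care needed is the bookkeeping match between the objects in Proposition \ref{cp3.2} and those of Construction \ref{cp2.14}.
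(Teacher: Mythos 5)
Your proposal is correct and follows essentially the same route as the paper: it invokes Proposition \ref{cp3.2} with $A=D_I$, $A_0=R\T[\frac{1}{a_I}]$ (which is the same ring as the paper's $R\T\cdot R_I$), $E_1=Q_i'$, $E_2=Q_i$, verifying the hypotheses via Proposition \ref{cp2.8}, Corollary \ref{cp2.9} and Proposition \ref{cp3.4}, and obtains the reverse factorization by taking inverses exactly as the paper does.
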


\begin{proof}
Let $i \in I$ and put $A = D_I$, $A_0 = R\T\cdot R_I$, $E = Q$, $E_1 = Q_i'$, $E_2 = Q_i$. Then $A$ is complete with respect to the ultrametric absolute value $|\cdot|_t$, and $A_0$ is $|\cdot|_t$-dense in $A$, since $A_0$ contains the ring $R_I[t]$. By Proposition \ref{cp2.8}, $A = A_0 + gA$ for each $0 \neq g \in A$. Clearly, $E_0 = \Quot(A_0) \cont E_2$. Condition (i) of Proposition \ref{cp3.2} holds by Corollary \ref{cp2.9} and condition (ii) holds by Proposition \ref{cp3.4}, thus $\GL_n(Q) = \GL_n(Q_i')\cdot \GL_n(Q_i)$. Consequently, $\GL_n(Q) =\GL_n(Q)^{-1} = \GL_n(Q_i)^{-1} \GL_n(Q_i')^{-1} =\GL_n(Q_i) \GL_n(Q_i')$. 
\end{proof}

\begin{Remark}\label{cp3.6} 
It is unknown to the authors whether the above matrix factorization results hold for the base rings. That is, whether $\GL_n(D_I) = \GL_n(D_i) \GL_n(D_i')$ for $i \in I$. In the case where $K = \bbQ$, the answer is positive, and a proof is given in \cite[Proposition 2.3]{HarbaterCAPS}. However, that proof does not generalize to arbitrary number fields (the critical point is that for $\bbZ$, the bound $C_1$ given by Lemma \ref{cp2.6} is 1, which ensures invertibility of certain convergent power series on the open unit disc). However, for our constructions to work, matrix factorization over the quotient fields is precisely the needed result.
\rembox
\end{Remark}

\section{Cyclic extensions}

As before, let $K$ be a number field with ring of integers $R=\mathcal{O}_K$, and let $n=[K:\Q]$.
In this section we construct cyclic Galois extensions of $E=\Quot(R\T)$ and embed them into
the quotient fields of our analytic rings.

\begin{Remark}\label{cp4.1} 
 We associate to $K$ its {\bf Minkowski space} $K_\bbR$, defined as follows. 
Let $\rho_1,\dots,\rho_r \in \Hom(K,\bbR)$ denote the real, and \linebreak $\sigma_1,\bar{\sigma}_1,\dots,\sigma_s,\bar{\sigma}_s \in \Hom(K,\bbC)$ the complex embeddings of $K$ into $\bbC$. Then $K_\bbR = \bbR^r \times \bbC^s$, and we have an embedding $\iota \colon K \rightarrow K_\bbR$ given by 
$$
 \iota(x) = (\rho_1(x),\dots,\rho_r(x),\sigma_1(x),\dots,\sigma_s(x))
$$ 
for each $x \in K$. Note that $n = r+2s$. We identify $K_\bbR$ with $\bbR^n$ via the standard isomorphism $\bbC\rightarrow\bbR^2$.

For any $\bbR$-basis $B=(b_1,\dots,b_n)$ of $\bbR^n$, the norm on $\bbR^n$ defined by
$$
\left\| \sum_i x_ib_i \right\|_B = \max_i|x_i|,\quad x_1,\dots,x_n\in\bbR
$$
is equivalent to the Euclidean norm $|\cdot|_{\bbR^n}$. In particular, $|\cdot|_{\bbR^n}$ is equivalent to $||\cdot||_e$, where $e$ is the standard basis of $\bbR^n$, so the norms $||\cdot||$ (see Remark \ref{cp2.1}) and
$|\cdot|_{\bbR^n}\circ \iota$ are equivalent on $K$.
\end{Remark}

\begin{Lemma}\label{cp4.2} 
If $a\neq0$ is a non-invertible element of $R$, then there exists $0 \neq b \in \Z[{1 \over a}]$ with $||b|| < 1$. 
\end{Lemma}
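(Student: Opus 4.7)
The plan is to apply Minkowski's lattice-point theorem in the Minkowski space $K_\bbR$ of Remark~\ref{cp4.1}. The first step would be a reduction to the case $K=\bbQ(a)$: since $\bbZ[1/a]$ is contained in the subfield $\bbQ(a)$ of $K$, and the value $||x||$ of an element $x\in\bbQ(a)$ is the same whether computed via embeddings of $K$ or via embeddings of $\bbQ(a)$ (each $\sigma\colon K\to\bbC$ restricts to an embedding of $\bbQ(a)$, and every embedding of $\bbQ(a)$ extends to one of $K$), I may replace $K$ by $\bbQ(a)$. Under this assumption $n=\deg_\bbQ(a)$, the order $\bbZ[a]$ is a free $\bbZ$-module of rank $n$ sitting as a finite-index subring of $R$, and its image $\iota(\bbZ[a])$ is a full-rank lattice in $K_\bbR\cong\bbR^n$ of some finite covolume $V_0$.

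Next, for each integer $k\geq 1$ I would consider the open, bounded, convex, centrally symmetric set
$$
 C_k = \bigl\{(y_\rho,z_\sigma)\in\bbR^r\times\bbC^s \st |y_\rho|<|\rho(a)|^k \text{ and } |z_\sigma|<|\sigma(a)|^k\text{ for all }\rho,\sigma\bigr\},
$$
whose Lebesgue volume factors as $2^r\pi^s\prod_\rho|\rho(a)|^k\prod_\sigma|\sigma(a)|^{2k} = 2^r\pi^s|N_{K/\bbQ}(a)|^k$. Since $a$ is a nonzero non-unit of $R$, the integer $|N_{K/\bbQ}(a)|$ is at least $2$, so $\mathrm{vol}(C_k)\to\infty$ as $k\to\infty$. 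Taking $k$ large enough that $\mathrm{vol}(C_k)>2^nV_0$, Minkowski's theorem will then furnish a nonzero $m\in\bbZ[a]$ with $\iota(m)\in C_k$, i.e.\ $|\sigma(m)|<|\sigma(a)|^k$ for every embedding $\sigma\colon K\to\bbC$. Setting $b:=m/a^k$ immediately gives $||b||=\max_\sigma|\sigma(m)|/|\sigma(a)|^k<1$ and $b\neq 0$.

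The last step is to verify that $b$ actually lies in $\bbZ[1/a]$ rather than merely in $K$. Dividing the minimal integer equation $a^n+c_{n-1}a^{n-1}+\dots+c_0=0$ through by $a^{n-1}$ exhibits $a = -c_{n-1}-c_{n-2}/a-\dots-c_0/a^{n-1}\in\bbZ[1/a]$ (the constant term $c_0=\pm N_{K/\bbQ}(a)$ is nonzero), whence $\bbZ[a]\subseteq\bbZ[1/a]$ and so $b=m\cdot(1/a)^k\in\bbZ[1/a]$. The main obstacle I anticipate is the preliminary reduction to $K=\bbQ(a)$: without it, $\bbZ[a]$ may fail to have full $\bbZ$-rank $n$ (for instance, when $K=\bbQ(\sqrt{2})$ and $a=2$), so $\iota(\bbZ[a])$ would not be a full-rank lattice in $\bbR^n$ and Minkowski's theorem would not apply directly. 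With that reduction in hand, the Minkowski step and the passage from $m/a^k$ into $\bbZ[1/a]$ are routine.
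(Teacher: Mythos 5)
Your proof is correct, but it takes a genuinely different route from the paper's. The paper argues softly: since $a^{-1}$ is not integral over $\bbZ$, the ring $\bbZ[1/a]$ is not finitely generated as a $\bbZ$-module, so its image under $\iota$ in $K_\bbR\cong\bbR^n$ cannot be a lattice; by the standard fact that a subgroup of $\bbR^n$ is a lattice if and only if it is discrete (Neukirch, Prop.~I.4.2), $\iota(\bbZ[1/a])$ is non-discrete, and the equivalence of $|\cdot|_{\bbR^n}\circ\iota$ with $\|\cdot\|$ immediately yields nonzero elements of arbitrarily small norm. Your argument instead applies Minkowski's convex body theorem to the full-rank lattice $\iota(\bbZ[a])$ (after the reduction to $K=\bbQ(a)$, which is indeed necessary and which you justify correctly, as you do the inclusion $\bbZ[a]\subseteq\bbZ[1/a]$ and the volume computation $\mathrm{vol}(C_k)=2^r\pi^s|N_{K/\bbQ}(a)|^k\to\infty$ using $|N_{K/\bbQ}(a)|\geq 2$ for a non-unit). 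What your approach buys is effectivity: it produces an explicit $k$, hence an explicit denominator $a^k$, for the element $b$, whereas the paper's argument is purely existential. The cost is the extra bookkeeping (the reduction to $\bbQ(a)$, checking that $a$ remains a non-unit there, and the passage from $\bbZ[a]$ to $\bbZ[1/a]$), all of which you handle; the paper's version is a three-line corollary of the discreteness criterion. Both arguments ultimately live in the same Minkowski space, so the difference is one of hard versus soft geometry of numbers rather than of underlying framework.
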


\begin{proof}
Since $a^{-1}$ is not integral over $\bbZ$, $\Z[a^{-1}]$ is not finitely generated as a $\bbZ$-module. Thus (in the notation of Remark \ref{cp4.1}) $\iota(\Z[a^{-1}])$ is subgroup but not a lattice of $\bbR^n$, i.e. a non-discrete subgroup \cite[Proposition I.4.2]{Neu99}. Since the norm induced on $K$ by $|\cdot|_{\bbR^n}$ is equivalent  to $||\cdot||$, the claim follows.
\end{proof}

\begin{Proposition}\label{cp4.3} 
Let $K$ be a number field with ring of integers $R=\mathcal{O}_K$.
Let $k\in\mathbb{N}$ and suppose that $K$ contains a root of unity of order $k$.
Let $a \in R\hefresh R^\times$ and $A = R[{1 \over a}]$. Put $B = A\T$, $E = \Quot(R\T)$, and  $L = \Quot(B)$.  
Then there exists a Galois extension $F$ of $E$ contained in $L$, with $\Gal(F/E) \isom \bbZ/k\bbZ$.
\end{Proposition}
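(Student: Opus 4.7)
The plan is to realize the cyclic extension as $F=E(\alpha)$ where $\alpha$ is an explicit binomial $k$-th root of a carefully chosen element $1-bt\in B$.

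First I would choose $b$ and construct $\alpha$. Apply Lemma~\ref{cp4.2} to get $0\neq b_0\in\bbZ[\tfrac{1}{a}]$ with $\|b_0\|<1$, and by replacing $b_0$ with a sufficiently high power arrange that $\|k^2 b_0\|<1$. Set $b:=k^2 b_0\in\bbZ[\tfrac{1}{a}]\subseteq A$ and form the binomial series
\[
\alpha\;:=\;\sum_{n=0}^\infty \binom{1/k}{n}(-b)^n t^n\in\tilde{\bbQ}[[t]],
\]
so that formally $\alpha^k=1-bt$. The key arithmetic input is that $k^{2n}\binom{1/k}{n}\in\bbZ$ for every $n\geq 0$. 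I would prove this prime by prime: for a rational prime $p\nmid k$, $1/k\in\bbZ_p$ forces $\binom{1/k}{n}\in\bbZ_p$; for $p\mid k$ with $v_p(k)=e\geq 1$, a direct calculation gives $v_p(\binom{1/k}{n})=-en-v_p(n!)$, hence $v_p(k^{2n}\binom{1/k}{n})=en-v_p(n!)\geq en-n/(p-1)\geq 0$. Consequently the $n$-th coefficient of $\alpha$ equals $\pm k^{2n}\binom{1/k}{n}\cdot b_0^n\in\bbZ\cdot\bbZ[\tfrac{1}{a}]\subseteq A$. Convergence on the open unit disc is immediate from $\|\binom{1/k}{n}(-b)^n\|\leq |\binom{1/k}{n}|\cdot\|b\|^n$ together with the polynomial decay of the binomial coefficient, yielding $\limsup\|\cdot\|^{1/n}\leq\|b\|<1$; by Remark~\ref{cp2.4}, $\alpha\in A\T=B$, and hence $F=E(\alpha)\subseteq L$.

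Next I would check that $[F:E]=k$. Since $\mu_k\subseteq K\subseteq E$ and $\alpha^k=1-bt\in E$, Kummer theory forces $F/E$ to be Galois cyclic of degree $d\mid k$; by the Vahlen--Capelli theorem, $d=k$ iff $1-bt\notin E^{\times p}$ for every prime $p\mid k$. Note that $X^k-(1-bt)\in A[t][X]$ is Eisenstein with respect to the prime ideal $(1-bt)\subseteq A[t]$, since the quotient $A[t]/(1-bt)\cong A[\tfrac{1}{b}]$ is a domain and $-(1-bt)\in(1-bt)\setminus(1-bt)^2$. Hence $X^k-(1-bt)$ is irreducible over $K(t)=\Quot(A[t])$. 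Since $K(t)$ is algebraically closed in $E$, the irreducibility carries over to $E$, and $[F:E]=k$.

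The main obstacle is keeping the coefficients of the formal $k$-th root inside $A$ despite the $k$-adic denominators appearing in $\binom{1/k}{n}$; the rescaling $b=k^2 b_0$ is precisely engineered so that $k^{2n}\binom{1/k}{n}$ is an honest integer, absorbing these denominators. A secondary delicate point is the transfer of irreducibility from $K(t)$ to $E$, which relies on the standard fact that $K(t)$ is algebraically closed in the field of arithmetic meromorphic functions on the open unit disc.
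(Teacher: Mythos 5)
Your construction of the root is correct and in fact more explicit than the paper's: the integrality of $k^{2n}\binom{1/k}{n}$ together with the bound $|\binom{1/k}{n}|\leq1$ does place $\alpha$ in $A\T$, whereas the paper obtains a root of $X^k-(1-k^2t)$ in $\Z[[t]]$ via Hensel's lemma, invokes Artin's theorem to get convergence on some small disc, and then rescales $t\mapsto b^mt$ to push the radius of convergence past $1$. That part of your argument is a legitimate alternative.

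The gap is in the irreducibility step. Eisenstein at the prime $(1-bt)$ of $A[t]$ only gives irreducibility of $X^k-(1-bt)$ over $K(t)$, and your transfer to $E=\Quot(R\T)$ rests on the assertion that $K(t)$ is algebraically closed in $E$. This is not a standard fact, and for this polynomial it is essentially equivalent to what you are trying to prove: by Kummer theory, irreducibility over $E$ fails exactly when $1-bt$ becomes a $p$-th power in $E^\times$ for some prime $p\mid k$, and the candidate root $(1-bt)^{1/p}=\sum_n\binom{1/p}{n}(-b)^nt^n$ genuinely exists in $A\T\cont L$ by your own computation (since $p\mid k$ gives $k^{2n}\binom{1/p}{n}\in\Z$); ruling out that it lies in the subfield $E$ of $L$ is precisely the hard content. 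Note also that $K(t)$ is \emph{not} algebraically closed in $\Quot(R[[t]])$ --- the Catalan generating function $\frac{1}{2t}(1-\sqrt{1-4t})\in\Z[[t]]$ is quadratic over $\Q(t)$ --- so any proof of your claim must use the archimedean convergence on the unit disc and not just integrality, and no such statement appears in the paper or the cited literature. The paper circumvents this entirely by proving irreducibility over a field \emph{containing} $E$: it chooses a prime $\frp$ of $R$ with $v_\frp(b^mk^2)<0$ (possible because $\|b\|<1$ forces $b\notin R$), passes to $\Omega=\Quot(O[[t]])$ with $O$ the completion of $R_\frp$, and applies the generalized Eisenstein criterion at the prime element $s=\frac{1}{b^mk^2}-t$ of $O[[t]]$. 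Your $b$ also admits such a prime $\frp$, so your proof can be repaired by replacing the transfer step with this completion-and-Eisenstein argument; but as written the decisive step is unjustified.
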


\begin{proof}
Consider the polynomial $p(X) = X^k-(1-k^2t)$. We first claim it has a root in $\bbZ[[t]]$. Indeed, $p'(1) = k, p(1) = k^2t \in p'(1)^2t\bbZ[[t]]$. By the general Hensel's Lemma \cite[Theorem 7.3]{Eis95} (for the $t$-adically complete domain $\bbZ[[t]]$), there exists $f(t)  \in \bbZ[[t]]$ with $p(f(t)) = 0$. Now view $f(t)$ as a formal power series in $\bbC[[t]]$, algebraic over $\bbC[t]$. By \cite[Theorem 2.14]{Ar68}, the field of formal power series in $\bbC((t))$ which converge at some non-zero point is algebraically closed in $\bbC((t))$, in particular $f(t)$ converges at some non-zero point. Thus, there exists $r > 0$ such that $f(t)$ converges on the open disc of radius $r$ around the origin.

By Lemma \ref{cp4.2} there exists $0 \neq b \in A$ with $||b|| < 1$, hence also $|\norm_{K/\bbQ}(b)| < 1$, which implies that $b \notin R$ (otherwise $\norm_{K/\bbQ}(b) \in \bbZ$). Thus there exists a non-archimedean prime $\frp$ of $R$ such that $v_\frp(b) < 0$ (where $v_\frp$ is the corresponding discrete valuation on $K$). Let $m \in \bbN$ be a sufficiently large integer such that $||b^m|| < r$ and $v_\frp(b^m) < -v_\frp(k^2)$. Put $g(t) = f({t b^m}) \in A[[t]]$. Then for each $\lambda \in \bbC$ with $|\lambda| < 1$ we have $|\lambda \cdot \sigma(b^m)| < r$, hence $\sigma(g)(\lambda) = f(\lambda \cdot \sigma(b^m))$ converges. It follows that $g(t) \in B$. Moreover, $g(t)$ is a root of the polynomial $q(X) = X^k - (1-(k^2 \cdot b^m)t) \in E[X]$.

We next claim that $q(X)$ is irreducible over $E$. 
Let $O$ be the completion of the localization $R_\frp$. By our assumptions, ${1 \over b^m \cdot k^2} \in \frp R_\frp$, hence by \cite[Lemma 1.5(a)]{Pa10}, the element $s = {1 \over b^m \cdot k^2} - t$ is a prime element of the ring $O[[t]]$, hence defines an $s$-adic valuation $w_s$ on $\Omega = \Quot(O[[t]])$. Then $w_s(s) = 1$ and $w_s(c) = 0$ for each constant $c \in K^\times$. By the generalized Eisenstein Criterion \cite[Section 6, Theorem 2.1]{Cas86}, the polynomial $r(X) = {1 \over b^m  k^2}\cdot q(X) = {1 \over b^m k^2}\cdot X^k - ({1 \over b^m k^2} - t) \in O[[t]][X]$ is irreducible over $\Omega$, hence $q(X)$ is irreducible over $\Omega$ and hence irreducible over the subfield $E$.

Let $F$ be the splitting field of $q(X)$ over $E$. Since $K \cont E$ contains a primitive $k$-th root of unity, $F/E$ is a Kummer extension with Galois group $\bbZ/k\bbZ$. Since $q(X)$ has a root $g(t) \in L$, we have $F \cont L$. 
\end{proof}

\section{Algebraic extensions of rings of power series}

Let $R$ be an integral domain with quotient field $K$. Let $R[[t]]$ be the ring of formal power series over $R$, viewed as a subring of 
$$
 B \;:=\; \bigcup_{0 \neq a \in R} R[[{t \over a}]]
$$ 
(the union taken inside $K((t))$). Put $F = \Quot(B)$. The goal of this section is to show that $F$ is separably closed in $K((t))$. This result may be viewed as an arithmetic version of the theorem of Artin \cite[Theorem 2.14]{Ar68} concerning convergent power series (with respect to an absolute value). Our proof is an adaptation of the proof of \cite[Proposition 3.10]{Pa08}.

\begin{Lemma}\label{cp5.1} 
Let $h(Y) = p_d Y^d + \ldots + p_1 Y + p_0 \in K[[t]][Y]$ be a polynomial over
$K[[t]]$. Suppose that for each $0 \leq k \leq d$ we have $p_k =
\sum_{n=0}^\infty b_{k,n}t^n$, where $\{b_{k,n}\} \cont K$ satisfy
$b_{0,0} = 0, b_{1,0} = 1, b_{2,0} = \ldots =
b_{d,0} = 0$. Suppose $y = \sum_{n=0}^\infty a_n t^n \in K[[t]]$ is a
root of $h$. Then for each $n \geq 1$, $a_n$ is a sum of products
of the form $\pm b_{k,j_0} a_{j_1} a_{j_2} \cdot \cdot \cdot
a_{j_k}$, with $0 \leq k \leq d, \ 0 \leq j_0 \leq n, \ 0 < j_1,
\ldots, j_k < n$ such that $j_0 + j_1 + \ldots + j_k = n$.
\end{Lemma}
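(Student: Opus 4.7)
\bigskip
\noindent\textbf{Proof plan.}
My plan is to expand $h(y)=0$ directly as a power series in $t$ and isolate $a_n$ from the coefficient of $t^n$. Observe first that the hypotheses $b_{0,0}=0$, $b_{1,0}=1$, $b_{k,0}=0$ for $k\geq 2$ say exactly that $h(Y)\equiv Y\pmod{t}$ in $K[[t]][Y]$, and since $h(y)=0$, reducing modulo $t$ forces $a_0=0$.

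Next I would substitute $y=\sum_{j\geq 1}a_jt^j$ into $h(Y)$ and compute. Expanding the product,
\begin{equation*}
 0\;=\;h(y)\;=\;\sum_{k=0}^d p_k\, y^k
  \;=\;\sum_{k=0}^d\Bigl(\sum_{j_0\geq 0}b_{k,j_0}t^{j_0}\Bigr)\Bigl(\sum_{j_1,\ldots,j_k\geq 1}a_{j_1}\cdots a_{j_k}\,t^{j_1+\cdots+j_k}\Bigr),
\end{equation*}
where the lower bound $j_1,\ldots,j_k\geq 1$ is legitimate because $a_0=0$ kills every other contribution. Extracting the coefficient of $t^n$ for a fixed $n\geq 1$ gives
\begin{equation*}
 0\;=\;\sum_{k=0}^d\;\sum_{\substack{j_0+j_1+\cdots+j_k=n\\ j_0\geq 0,\ j_1,\ldots,j_k\geq 1}}b_{k,j_0}\,a_{j_1}\cdots a_{j_k}.
\end{equation*}

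Now I would single out the unique tuple $(k,j_0,j_1)=(1,0,n)$: its contribution is $b_{1,0}a_n=a_n$. Moving this to the other side,
\begin{equation*}
 a_n\;=\;-\sum_{(k,j_0,j_1,\ldots,j_k)\neq(1,0,n)}b_{k,j_0}\,a_{j_1}\cdots a_{j_k},
\end{equation*}
the sum being taken over the same index set. Each summand already has the shape asserted in the statement; it remains to verify that the indices $j_1,\ldots,j_k$ appearing in the right-hand side are strictly less than $n$. If $k\geq 2$, then $j_i\leq n-(k-1)-j_0\leq n-1$ since each other $j_\ell$ is at least $1$. If $k=1$, then the exclusion of $(1,0,n)$ forces $j_0\geq 1$, whence $j_1=n-j_0\leq n-1$. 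If $k=0$ there are no $j_i$ with $i\geq 1$ to check.

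I do not anticipate a serious obstacle: the whole argument is a bookkeeping exercise once one notices that the hypothesis $h(Y)\equiv Y\pmod t$ supplies the needed linear term to solve for $a_n$, and that $a_0=0$ truncates the sums to positive indices $j_i$. The mildest point of care is confirming in the $k=1$ case that removing the distinguished tuple really does force $j_1<n$, which is exactly what $b_{1,0}=1$ and $j_0\geq 1$ give us.
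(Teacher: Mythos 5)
Your proof is correct. The paper's own ``proof'' is just a deferral to \cite[Lemma 3.9]{Pa08}, and your direct coefficient-extraction argument --- deducing $a_0=0$ from $h(Y)\equiv Y\pmod{t}$, expanding $h(y)$ with all $j_1,\dots,j_k\geq 1$, and isolating the unique term $b_{1,0}a_n$ in the coefficient of $t^n$ --- is exactly the standard argument that the cited lemma carries out, so the approaches coincide.
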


\begin{proof}
The proof is the same as the proof of \cite[Lemma 3.9]{Pa08}, with $F$ there replaced by $K$ here. Note that the assumption in 
\cite[Lemma 3.9]{Pa08} that almost all of the coefficients are zero is not used in the proof.
\end{proof}

\begin{Remark}\label{cp5.2} 
Let $y(t) = \sum_{n=0}^\infty a_n t^n \in K[[t]]$. If $y(at) \in B$ for some $0 \neq a \in R$, then $y(t) \in B$. Indeed, $y(at) \in B$ implies the existence of $b \in R$ such that $y(at) \in R[[{t \over b}]]$, which implies that $y(t) \in R[[{t\over ab}]] \cont B$. 
\rembox
\end{Remark}

\begin{Proposition}\label{cp5.3} 
The field $F$ is separably closed in $K((t))$. 
\end{Proposition}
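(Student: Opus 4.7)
The plan is to adapt the proof of \cite[Proposition~3.10]{Pa08}, combining Lemma~\ref{cp5.1} and Remark~\ref{cp5.2} with a Hensel-lifting argument.

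Let $y \in K((t))$ be separable algebraic over $F$; we want to show $y \in F$. Since $R[[t]] \subseteq B$ (take $a = 1$), we have $K \subseteq \Quot(R[[t]]) \subseteq F$ and $t \in F^\times$, so after multiplying $y$ by a power of $t$ and subtracting its constant term, I may assume $y = \sum_{n \geq 1} a_n t^n \in tK[[t]]$.

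The heart of the argument is to produce a polynomial $h(Y) \in K[[t]][Y]$ with $h(y) = 0$ and $h(Y) \equiv Y \pmod{t}$, so that Lemma~\ref{cp5.1} applies. Starting with the minimal polynomial $h_{\min} \in F[Y]$ of $y$ and clearing denominators, one lands in $K[[t]][Y]$. Factoring the reduction modulo $t$ as $Y^e \bar g(Y)$ with $\bar g(0) \neq 0$ and applying Hensel's lemma yields $h_{\min} = h_1 h_2$ with $h_1 \equiv Y^e \pmod{t}$ and $h_2 \equiv \bar g \pmod{t}$; since $h_2(y)$ has nonzero constant term $\bar g(0)$, necessarily $h_1(y) = 0$. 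If $e = 1$, set $h := h_1$ and we are done. Otherwise, the Newton polygon of $h_1$ combined with the separability of $y$ over $F$ permits a substitution $Y = tW$ (using Remark~\ref{cp5.2} to descend from $y$ to $y/t$), which reduces the multiplicity $e$; finitely many iterations bring us to $e = 1$.

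With $h \in K[[t]][Y]$ in normalized form, its coefficients $p_0, \ldots, p_d$ lie in $B$, so there exists $0 \neq a \in R$ with $p_k \in R[[t/a]]$ for every $k$. Writing $p_k = \sum_{j \geq 0} b_{k, j} t^j$, this means $a^j b_{k, j} \in R$ for all $j, k$. Lemma~\ref{cp5.1} expresses $a_n$ as a sum of products $\pm b_{k, j_0} a_{j_1} \cdots a_{j_k}$ with $j_0 + j_1 + \cdots + j_k = n$ and $0 < j_1, \ldots, j_k < n$; multiplying by $a^n = a^{j_0} \prod_{i \geq 1} a^{j_i}$, the factor $a^{j_0} b_{k, j_0}$ lies in $R$ by the choice of $a$, and each $a^{j_i} a_{j_i}$ lies in $R$ by induction on $n$ (with the trivial base $a_0 = 0$). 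Hence $a^n a_n \in R$ for every $n$, so $y \in R[[t/a]] \subseteq B \subseteq F$, as required.

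The main obstacle is the normalization step: exhibiting $h$ with $h \equiv Y \pmod{t}$. Separability of $y$ over $F$ is essential there to guarantee that the Newton-polygon substitution $Y = tW$ both preserves the setup and terminates, delivering after finitely many steps a polynomial satisfied by $y$ whose reduction mod $t$ is exactly $Y$.
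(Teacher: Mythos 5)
Your endgame (the induction showing $a^n a_n\in R$, hence $y\in R[[t/a]]\subseteq B$) is fine, and so is the observation that everything hinges on producing a polynomial $h$ annihilating $y$ with $h\equiv Y\pmod t$ \emph{and with coefficients in $B$}. The gap is in how you produce it. You obtain $h=h_1$ by a Hensel factorization of the denominator-cleared minimal polynomial inside $K[[t]][Y]$, possibly iterated with substitutions $Y=tW$. But Hensel's lemma is applied in the $t$-adically complete ring $K[[t]]$, so the factors $h_1,h_2$ have coefficients in $K[[t]]$ and nothing more: $B$ is not $t$-adically complete, and there is no a priori control on the denominators of the coefficients of $h_1$. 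Your later assertion that ``its coefficients $p_0,\dots,p_d$ lie in $B$'' is therefore unjustified --- indeed, the claim that Hensel factors of a polynomial over $B$ again have coefficients in $B$ is essentially a Henselian-type property of $B$ whose content is precisely the separable closedness of $F$ in $K((t))$, so the argument is circular at this point. Without $p_k\in R[[t/a]]$ the final induction has no base. (The step ``the Newton polygon of $h_1$ \dots\ permits a substitution $Y=tW$ which reduces $e$'' is also not argued: a single substitution need not reduce $e$ when all roots of $h_1$ have valuation $\ge 2$, and each further iteration requires another Hensel factorization, compounding the loss of denominator control; separability plays no role there.)

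The paper avoids factoring altogether. In Part A it replaces $y$ by $t^{-s}\bigl(y-\sum_{n\le s}a_nt^n\bigr)$ with $s=1+\max_{i\ge2}v(y-y_i)$, where $y_2,\dots,y_d$ are the other conjugates of $y$ over $F$ (separability enters exactly here, to guarantee $v(y-y_i)<\infty$). This makes $v(y)\ge1$ while $v(y_i)\le-1$ for $i\ge2$, so the \emph{irreducible} minimal polynomial --- whose coefficients stay in $B$, and then in $R[[t]]$ after clearing denominators and substituting $t\mapsto at$ via Remark \ref{cp5.2} --- automatically satisfies $v(p_1)<v(p_k)$ for all $k\ne1$ by \cite[Lemma 3.8]{Pa08}; dividing by $t^{v(p_1)}$ and rescaling $t$ by $b_{1,0}$ puts it exactly in the form required by Lemma \ref{cp5.1}, with all $b_{k,n}\in R$. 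To repair your outline you would have to replace the Hensel factorization by some such valuation-theoretic normalization of the conjugates, or else prove a Weierstrass/Hensel statement over $B$ itself --- which is the hard content of the proposition.
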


\begin{proof}
 Let $y =\sum_{n=l}^{\infty}a_nt^n \in K((t))$ be separably algebraic over $F$.

\vspace{0.2cm}
\subdemoinfo{Part A}{A shift of $y$} 

\noindent
Let $y_1,y_2,\ldots,y_d$ with $y_1 = y$ be the distinct conjugates of $y$ over $F$. Then $y \in F$ if and only if $d = 1$, so suppose $d \geq 2$. Let $v$ be the $t$-adic valuation on $K((t))$.
Extend $v$ to the algebraic closure of $K((t))$ and let $r=\max\{v(y-y_i) \st i=2,\ldots,d\}$ $(\neq\infty)$, $s=r+1$.
Let $y_i^{\prime} = {t^{-s}} (y_i - \sum_{n=l}^s a_n t^n)$ for each $1\leq i \leq d$. Then $y_1^{\prime},\ldots,y_d^{\prime}$ are the distinct conjugates of $y_1^{\prime}$ over $F$. Moreover,
$v(y_1^{\prime})\geq 1$, hence for each $2 \leq i \leq d$ we have $v(y_1^{\prime}-y_i^{\prime}) = v(y_1 - y_i) -s \leq r-s = -1$, hence $v(y_i^{\prime}) \leq -1$.

Since $K(t) \cont F$ we may replace each $y_i$ with $y_i^{\prime}$ to assume that $v(y)\geq 1$ and $v(y_i) \leq -1$ for each $2 \leq i \leq d$. In particular, $y=\sum_{n=0}^{\infty} a_n t^n$ with
$a_0=0$ and $y_1, \ldots, y_d$ are the roots of an irreducible
polynomial $h(Y)=p_dY^d+\ldots+p_1Y+p_0 \in F[Y]$.

\vspace{0.2cm}
\subdemoinfo{Part B}{The values of the coefficients}

\noindent
Multiplying by the common denominator, we may assume that $p_i \in B$ for all $i$. Thus for each $0 \leq i \leq d$ there exists $0 \neq\alpha_i \in R$ such that $p_i \in R[[{t \over \alpha_i}]]$. Without loss of generality $a = \alpha_i$ is independent of $i$. By Remark \ref{cp5.2}, if $y(at) \in B$ then so is $y(t)$. Thus we may replace each $p_i(t)$ with $p_i(at)$ to assume that $p_i(t) \in R[[t]]$.

By \cite[Lemma 3.8]{Pa08} we have $e := v(p_1)< v(p_i)$ for each $i \neq 1$.
Divide the $p_i$ by $t^e$ to assume that $v(p_1) = 0 < v(p_i)$ for
each $i \neq 1$. Therefore for each $0 \leq k \leq d$ we have $p_k =
\sum_{n=0}^\infty b_{k,n} t^n$, where $b_{k,n} \in R$, $b_{1,0} \neq 0$, and for each $k \neq 1$ we have $b_{k,0} = 0$. Put $\beta=b_{1,0}$.

By Remark \ref{cp5.2} it suffices to show that $\tilde{y}(t) = y(\beta t) \in B$.
The substitution $t \mapsto \beta t$ defines an automorphism of
$K((t))$, hence the following equality follows from $h(y) = 0$:

$${p_d(\beta t) \over \beta}\tilde{y}^d+ \ldots + {p_1(\beta t)
\over \beta}\tilde{y} + {p_0(\beta t) \over \beta} = 0.$$
The coefficients in this equality are all in $R[[t]]$, in particular $${p_1(\beta t) \over \beta} = {\beta + b_{1,1}\beta t +
\ldots \over \beta} = 1 + b_{1,1} t + \ldots,$$ hence without loss
of generality we may assume that $\beta = 1$.

\vspace{0.2cm}
\subdemoinfo{Part C}{The coefficients $a_n$}

\noindent
By Lemma \ref{cp5.1}, for each $n$, $a_n$ is a sum of products of the form
$\pm b_{k,j_0} a_{j_1} a_{j_2} \cdot \cdot \cdot a_{j_k}$, with $0 \leq k \leq d, 0 \leq j_0 \leq n, 0 < j_1,
\ldots, j_k < n$ such that $j_0 + j_1 + \ldots + j_k = n$.

We now claim that $a_n \in R$ for all $n \geq 0$. Indeed, $a_0 = 0 \in R$. Assume that $a_m \in R$ for each $0 \leq
m \leq n-1$. Then, each summand in $a_n$ belongs to $R$, hence
$a_n \in R$. Thus $y(t) \in R[[t]] \cont B \cont F$. 
\end{proof}

\begin{Corollary}\label{cp5.4} 
Suppose $K_0$ is a number field and $K$ is a finite Galois extension of $K_0$. Let $R_0$ be the ring of integers of $K_0$, and let $R$ be the ring of integers of $K$. Put $E = \Quot(R\T)$ and let $F$ be a finite Galois extension of $E$. Suppose $\phi$ is a $K$-rational place of $F$ extending the place $t \mapsto 0$ of $E$, and that $\phi$ is unramified in $F/E$. Then there exists an $E$-embedding of $F$ into $\Quot(R[{1 \over a}][[t]])$, for some $0 \neq a \in R_0$. 
\end{Corollary}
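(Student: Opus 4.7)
The plan is to realize $F$ as a subfield of $K((t))$ by means of the place $\phi$, and then apply Proposition \ref{cp5.3} to show that the image of $F$ actually lies in $\Quot(B)$, where $B = \bigcup_{0 \neq a \in R} R[[t/a]]$.

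First, I would check that the completion of $E = \Quot(R\T)$ at the place $t \mapsto 0$ equals $K((t))$: this valuation has uniformizer $t$ and residue field $\Quot(R) = K$, and the $t$-adic completion of $R\T$ is $R[[t]] \subseteq K[[t]]$, whose fraction field is $K((t))$. Since $\phi$ extends $t \mapsto 0$, is $K$-rational (so $f(\phi) = 1$), and is unramified (so $e(\phi) = 1$), the completion $\hat{F}_\phi$ has degree $e(\phi)f(\phi) = 1$ over $K((t))$, hence $\hat{F}_\phi = K((t))$. The natural map $F \to \hat{F}_\phi$ then yields an $E$-embedding $\psi \colon F \hookrightarrow K((t))$ whose restriction to $R\T$ is the usual inclusion $R\T \hookrightarrow R[[t]] \subseteq K[[t]]$.

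Next, I would take a primitive element $z$ with $F = E(z)$ (available since $F/E$ is finite separable) and set $\tilde{z} = \psi(z) \in K((t))$. Because $F/E$ is separable, $\tilde{z}$ is separably algebraic over $E$, hence also over $\Quot(B) \supseteq E$. By Proposition \ref{cp5.3}, $\Quot(B)$ is separably closed in $K((t))$, so $\tilde{z} \in \Quot(B)$. Writing $\tilde{z} = u/v$ with $u, v \in B$ and enlarging to a common denominator, there is some $0 \neq a \in R$ such that $u, v \in R[[t/a]]$; since $R[[t/a]] \subseteq R[\frac{1}{a}][[t]]$ (by expanding powers of $t/a$), we conclude that $\tilde{z} \in \Quot(R[\frac{1}{a}][[t]])$.

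Finally, to land in a localization by an element of $R_0$, I would replace $a$ by its norm $a' = \norm_{K/K_0}(a) \in R_0$; since $K/K_0$ is Galois, $a' = a \cdot \prod_{\sigma \neq \id}\sigma(a) \in aR$, so $a \mid a'$ in $R$ and $R[\frac{1}{a}] \subseteq R[\frac{1}{a'}]$. Combined with $\psi(R\T) \subseteq R[[t]] \subseteq R[\frac{1}{a'}][[t]]$, this yields $\psi(F) = \psi(E)(\tilde{z}) \subseteq \Quot(R[\frac{1}{a'}][[t]])$, the desired embedding. The only nontrivial input is the separably-closed property of $\Quot(B)$ supplied by Proposition \ref{cp5.3}; everything else is formal bookkeeping with completions and norms.
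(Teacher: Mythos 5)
Your proposal is correct and follows essentially the same route as the paper: embed $F$ into $K((t))$ via the completion at the unramified $K$-rational place $\phi$, invoke Proposition \ref{cp5.3} to land in $\Quot(R[[t/a]])$ for some $a\in R$, and pass to the norm $\norm_{K/K_0}(a)\in R_0$ to finish. The only (immaterial) difference is that you enlarge the denominator after converting $R[[t/a]]$ into $R[\frac1a][[t]]$, whereas the paper enlarges $R[[t/b]]$ to $R[[t/a]]$ first; both steps are justified.
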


\begin{proof}
Since $\phi$ is $K$-rational and unramified in $F/K(t)$, the completion of $F$ at $\phi$ is isomorphic to $K((t))$, 
and therefore we can assume that $E\subseteq F\subseteq K((t))$.
Since $E = \Quot(R\T) \cont \Quot(R[[t]])$, Proposition \ref{cp5.3} implies that $F \cont \Quot(R[[{t \over b}]])$ for some $0 \neq b \in R$. Let $0 \neq a = \norm_{K/K_0}(b) \in R_0$. Then $R[[{t \over b}]] \cont R[[{t \over a}]]$, hence $F \cont \Quot(R[[{t \over a}]]) \cont \Quot(R[{1 \over a}][[t]])$. 
\end{proof}

\section{Solution of split embedding problems}

In this section we finally prove our Main Theorem. We start by recalling a few definitions:

\begin{Definition}
Let $E_0/K_0$ be a regular field extension.
A {\bf finite split embedding problem} (FSEP) over $E_0$ is
an epimorphism $\Gamma_1\lsdp H\rightarrow\Gamma_1$,
where $\Gamma_1=\Gal(F_1/E_0)$ for some finite Galois extension $F_1$ of $E_0$,
and $\Gamma_1$ acts on the finite group $H$.
A {\bf solution} to the FSEP
$\Gamma_1\lsdp H\rightarrow\Gamma_1$
is a finite Galois extension $F$ of $E_0$ that contains $F_1$, such that
$\Gal(F/E_0)\cong\Gamma_1\lsdp H$ and the restriction of automorphisms from $F$ to $F_1$ coincides with
the projection from $\Gamma_1\lsdp H$ onto $\Gamma_1$.
Let $K$ be the algebraic closure of $E_0$ in $F_1$.
The solution $F$ is {\bf $K_0$-regular} if $K$ is algebraically closed in $F$.

Note that any FSEP $\Gal(L_0/K_0)\lsdp H\rightarrow\Gal(L_0/K_0)$ over $K_0$ gives rise to an
FSEP $\Gal(E_0L_0/E_0)\lsdp H\rightarrow\Gal(E_0L_0/E_0)$ over $E_0$, and $\Gal(E_0L_0/E_0)\cong\Gal(L_0/K_0)$.
A {\bf solution over $E_0$} to an embedding problem over $K_0$ is a solution
to the induced embedding problem over $E_0$.
\rembox
\end{Definition}

\begin{Lemma}\label{cp6.1} 
Suppose $E_0/K_0$ is regular, $E/E_0$ is a finite Galois extension, and $\Gal(E/E_0)$ acts on a finite group $G$. Let $E'$ be a finite Galois extension of $E_0$ that contains $E$, let $\res \colon \Gal(E'/E_0) \to \Gal(E/E_0)$ be the restriction map, and let $h \colon G' \to G$ be an epimorphism of groups. 
Suppose $\Gal(E'/E_0)$ acts on $G'$ such that $h(\sigma^\gamma) = h(\sigma)^{\res(\gamma)}$ for all $\gamma \in \Gal(E'/E_0), \sigma \in G'$. 
If the FSEP 
$$
 \Gal(E'/E_0) \lsdp G'\to \Gal(E'/E_0)
$$ 
has a $K_0$-regular solution, then so does 
$$
 \Gal(E/E_0) \lsdp G \to \Gal(E/E_0). 
$$
\end{Lemma}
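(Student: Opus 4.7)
The plan is to lift the smaller embedding problem inside the larger one along a natural quotient map and then descend the given solution.

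First I would observe that the compatibility condition $h(\sigma^\gamma)=h(\sigma)^{\res(\gamma)}$ is exactly what is needed to make
$$
 \pi\colon \Gal(E'/E_0)\lsdp G'\longrightarrow\Gal(E/E_0)\lsdp G,\qquad (\gamma,\sigma)\longmapsto(\res(\gamma),h(\sigma))
$$
a group homomorphism; a direct check using the semidirect product multiplication confirms this. Clearly $\pi$ is surjective (since $\res$ and $h$ are), and its kernel is the set of pairs $(\gamma,\sigma)$ with $\gamma\in\Gal(E'/E)$ and $\sigma\in N:=\Ker(h)$. Since the $\Gal(E'/E_0)$-action on $G'$ preserves $N$ (by compatibility), this kernel is the semidirect product $H:=\Gal(E'/E)\lsdp N$, which is normal in $\Gal(E'/E_0)\lsdp G'$.

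Next, let $F'$ be a $K_0$-regular solution of the larger FSEP, so that we may identify $\Gal(F'/E_0)$ with $\Gal(E'/E_0)\lsdp G'$ in a way compatible with restriction to $E'$. Define $F:=(F')^H$. Then $F/E_0$ is a Galois extension with $\Gal(F/E_0)\cong(\Gal(E'/E_0)\lsdp G')/H\cong\Gal(E/E_0)\lsdp G$. Since every element of $H$ projects to $\Gal(E'/E)\subseteq\Gal(E'/E_0)$ on restriction to $E'$, it fixes $E$, so $E\subseteq F$. The restriction map $\Gal(F/E_0)\to\Gal(E/E_0)$ factors through $\Gal(E'/E_0)$, and under the isomorphism above corresponds to $(\bar\gamma,\bar\sigma)\mapsto\bar\gamma$, i.e.~the projection of the semidirect product onto $\Gal(E/E_0)$. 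Thus $F$ is a solution of the smaller FSEP.

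It remains to verify $K_0$-regularity. Let $K$ be the algebraic closure of $E_0$ in $E$ and $K'$ the algebraic closure of $E_0$ in $E'$; by hypothesis $K'$ is algebraically closed in $F'$. Take $\alpha\in F$ algebraic over $E_0$. Then $\alpha\in F'$ is algebraic over $E_0$, hence $\alpha\in K'\subseteq E'$, so $\alpha\in F\cap E'$. By Galois theory, $F\cap E'$ is the fixed field in $E'$ of the image of $H$ under the projection $\Gal(E'/E_0)\lsdp G'\to\Gal(E'/E_0)$, which is exactly $\Gal(E'/E)$; hence $F\cap E'=E$ and so $\alpha\in E$, giving $\alpha\in K$. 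Thus $K$ is algebraically closed in $F$, and $F$ is a $K_0$-regular solution. The step most likely to require care is the identification $F\cap E'=E$ together with the bookkeeping of semidirect product structures under $\pi$, but both reduce to Galois correspondence once the compatibility condition is exploited.
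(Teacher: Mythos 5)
Your proof is correct and takes essentially the same route as the paper, which simply defers to \cite[Lemma 1.1]{HaJ98b}: form the natural epimorphism $\pi$ of semidirect products, identify its kernel $H=\Gal(E'/E)\ltimes\Ker(h)$, and take $F=(F')^H$, with $F\cap E'=E$ doing the work for regularity. One caveat: in the regularity step you must take $\alpha\in F$ algebraic over $K_0$, not over $E_0$ --- every element of $F$ is algebraic over $E_0$, so as literally written your deduction $\alpha\in K'$ would force the absurd conclusion $F\subseteq E'$; with $K$ and $K'$ taken to be the relative algebraic closures of $K_0$ in $E$ and $E'$ (the slip traces back to the same $E_0$/$K_0$ typo in the paper's definition of a $K_0$-regular solution), the rest of your argument goes through verbatim.
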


\begin{proof}
The proof is verbally the same as the proof of \cite[Lemma 1.1]{HaJ98b} (where the lemma is proven for rational function fields, but the proof works for arbitrary regular extensions).
\end{proof}

\begin{Proposition}\label{cp6.3} 
Let $K$ be a number field and let $R=\mathcal{O}_K$ be its ring of integers. Then $R\T$ is the compositum of $\bbZ\T$ and $R$.
\end{Proposition}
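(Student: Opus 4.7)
The plan is to use an integral basis to decompose elements of $R\T$ and then control the convergence of each component via equivalence of norms.

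First, I would fix an integral basis $b_1,\dots,b_n\in R$ of $R$ over $\Z$, so $R=\bigoplus_{i=1}^n\Z b_i$ and consequently $R[[t]]=\bigoplus_{i=1}^n \Z[[t]]\cdot b_i$. Every $f\in R[[t]]$ thus admits a unique decomposition $f=\sum_{i=1}^n b_i g_i$ with $g_i\in\Z[[t]]$. The inclusion $R\cdot\Z\T\subseteq R\T$ is immediate because $R\subseteq R\T$ and $\Z\T\subseteq R\T$, and $R\T$ is a ring. So everything reduces to proving the reverse inclusion: if $f=\sum_{k=0}^\infty a_kt^k\in R\T$, then each $g_i=\sum_{k=0}^\infty c_{i,k}t^k$ with $a_k=\sum_i c_{i,k}b_i$ (and $c_{i,k}\in\Z$) already lies in $\Z\T$.

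Next, I would invoke the equivalence of norms from Remark~\ref{cp4.1}. Under the embedding $\iota\colon K\to K_\bbR\cong\bbR^n$, the basis $b_1,\dots,b_n$ is mapped to an $\bbR$-basis of $K_\bbR$, so the norm $\|\sum_i x_ib_i\|_B=\max_i|x_i|$ is equivalent to the Euclidean norm on $K_\bbR$ and hence to the restriction of $\|\cdot\|$ to $K$. Consequently there is a constant $M>0$ (depending only on $K$ and the chosen basis) such that for any rationals $x_1,\dots,x_n$,
$$
\max_i|x_i|\leq M\cdot\Bigl\|\sum_{i=1}^n x_ib_i\Bigr\|.
$$
Applying this to $a_k=\sum_i c_{i,k}b_i$ gives $|c_{i,k}|\leq M\cdot\|a_k\|$ for every $k\geq 0$ and every $i$.

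Finally, I would extract convergence from this bound. By Remark~\ref{cp2.4}, the hypothesis $f\in R\T$ means $\limsup_{k\to\infty}\|a_k\|^{1/k}\leq 1$. Combining with $|c_{i,k}|\leq M\|a_k\|$ yields
$$
\limsup_{k\to\infty}|c_{i,k}|^{1/k}\leq\limsup_{k\to\infty}(M\|a_k\|)^{1/k}=\limsup_{k\to\infty}\|a_k\|^{1/k}\leq 1,
$$
so $g_i\in\Z\T$ for each $i$, and therefore $f=\sum_i b_ig_i$ lies in the compositum $R\cdot\Z\T$.

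I do not expect a serious obstacle here; the only point that needs care is the coefficient-extraction bound, which is really just the statement that projection onto a coordinate of $\bbR^n$ is Lipschitz with respect to any norm. Everything else is routine bookkeeping with the definition of $R\T$ from Remark~\ref{cp2.4}.
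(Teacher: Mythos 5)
Your proof is correct and follows essentially the same route as the paper's: decompose $f$ coefficient-wise over an integral basis, use the Minkowski embedding and the equivalence of norms from Remark~\ref{cp4.1} to bound the integer coordinates by a constant times $\|a_k\|$, and conclude via the $\limsup$ criterion of Remark~\ref{cp2.4}. No gaps.
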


\begin{proof}
Let $f=\sum_{i=0}^\infty f_it^i\in R\T$ and let $z =(z_1,\dots,z_n) \in R^n$ be an integral basis of $K$.
Let $\iota:K\rightarrow K_\mathbb{R}$ be the embedding of $K$ into its Minkowski space (Remark \ref{cp4.1}).
By \cite[Proposition I.5.2]{Neu99}, $\iota(z)=(\iota(z_1),\dots,\iota(z_n))$ is an $\bbR$-basis of $K_\bbR$. 
By Remark \ref{cp4.1}, $||\cdot||=||\cdot||_e\circ\iota$ and $||\cdot||_{\iota(z)}\circ\iota$ are equivalent on $K$, hence there exists $c\in\bbR$ such that $||\iota(x)||_{\iota(z)}\leq c\cdot||x||$ for each $x\in K$.

For each $i$, let $f_i=\sum_{k=1}^n f_{ik}z_k$, $f_{ik}\in\bbZ$.
Then $|f_{ik}|\leq||\iota(f_k)||_{\iota(z)}\leq c\cdot||f_i||$.
Thus, $\limsup_i|f_{ik}|^{1/i}\leq \limsup_i||f_i||^{1/i}\leq 1$,
so $g_k:=\sum_i f_{ij}t^i\in\bbZ\T$ for $k=1,\dots,n$,
hence $f=\sum_k g_kz_k\in\bbZ\T\cdot R$.
\end{proof}

We shall need the following technical lemmas:

\begin{Lemma}\label{cp6.4} 
Let $K_0$ be a number field, let $K$ be a finite Galois extension of $K$, let $R=\mathcal{O}_K$ be the ring of integers of $K$, and let $0 \neq b \in R$. There exists a primitive element $z \in R \hefresh R^\times$ of $K/K_0$ such that all of its conjugates over $K_0$ are pairwise co-prime. Moreover, each of the conjugates of $z$ is co-prime to $b$.
\end{Lemma}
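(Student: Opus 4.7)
The plan is to choose $z$ as a generator of a principal power of a well-chosen totally split prime.  Let $\Gamma=\Gal(K/K_0)$, $n=[K:K_0]$, and let $h$ be the class number of $R$.  Let $S_0$ be the (finite) set of primes of $R_0$ lying below the prime divisors of $(b)$ in $R$.  By the Chebotarev density theorem, the set of primes of $R_0$ that are totally split in $K/K_0$ has positive density, so one can pick a non-zero prime $\mathfrak{q}$ of $R_0$ that is totally split in $K/K_0$ and does not lie in $S_0$.

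Factor $\mathfrak{q}R=\mathfrak{p}_1\cdots\mathfrak{p}_n$ into distinct primes of $R$ and set $\mathfrak{p}=\mathfrak{p}_1$.  Since $\mathfrak{q}$ is totally split, the decomposition group of $\mathfrak{p}$ in $\Gamma$ is trivial, so $\gamma\mapsto\gamma(\mathfrak{p})$ is injective and $\{\gamma(\mathfrak{p}):\gamma\in\Gamma\}=\{\mathfrak{p}_1,\ldots,\mathfrak{p}_n\}$.  Since the class group of $R$ has order $h$, the ideal $\mathfrak{p}^h$ is principal; choose $z\in R$ with $(z)=\mathfrak{p}^h$.  Then $(z^\gamma)=\gamma(\mathfrak{p})^h$ for every $\gamma\in\Gamma$.

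Verification of the required properties is then immediate.  For $\gamma\neq\gamma'$ in $\Gamma$ the ideals $(z^\gamma)=\gamma(\mathfrak{p})^h$ and $(z^{\gamma'})=\gamma'(\mathfrak{p})^h$ are powers of two distinct prime ideals of $R$, hence coprime, so the $\Gamma$-conjugates of $z$ are pairwise coprime in $R$.  In particular no non-trivial $\sigma\in\Gamma$ can fix $z$, so the stabilizer of $z$ in $\Gamma$ is trivial, $K_0(z)=K$, and $z$ is a primitive element of $K/K_0$.  The ideal $(z)=\mathfrak{p}^h$ is a proper ideal of $R$, so $z\notin R^\times$, and since each $\mathfrak{p}_i$ lies over $\mathfrak{q}\notin S_0$, every $\gamma(\mathfrak{p})$ is coprime to $(b)$, whence each conjugate $z^\gamma$ is coprime to $b$.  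The only genuinely delicate point is the simultaneous choice of $\mathfrak{q}$ in the first step, which is handled by Chebotarev; everything else is bookkeeping in the Dedekind domain $R$ using the finiteness of the class group.
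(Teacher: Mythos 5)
Your proposal is correct and follows essentially the same route as the paper: pick a prime of $\mathcal{O}_{K_0}$ that is totally split in $K/K_0$ and avoids the primes dividing $b$, take the $h$-th power of one prime above it ($h$ the class number) to get a principal ideal, and let $z$ be a generator. The only cosmetic difference is that you invoke Chebotarev for the existence of totally split primes where the paper cites a lemma from Fried--Jarden, and you spell out the primitivity argument (trivial stabilizer of $z$) slightly more explicitly.
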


\begin{proof}
Set $n = [K:K_0]$, let $R=\mathcal{O}_{K_0}$ be the ring of integers of $K_0$ and let $\calF_b$ be the family of primes ideals of $R$ which contain $b$. 
By \cite[Lemma 13.3.1]{FJ} there exist infinitely many prime ideals of $R_0$ which are totally split in $K/K_0$. Choose such a prime $\frp$ of $R_0$, with the additional property that all of the primes $\mathfrak{q}_1,\ldots,\mathfrak{q}_n$ of $R$ which lie over $\frp$ do not belong to the (finite) family $\calF_b$. Since the ideal class group of a number field is finite, there exists an integer $h$ such that $\mathfrak{q}_1^h$ is a principal ideal of $R$, and we denote a generator of this ideal by $z$. It follows that $z$ has $n$ distinct conjugates and that all of the conjugates are pairwise co-prime. 
In particular, $z$ is a primitive element of $K/K_0$,
and since $z$ generated a proper ideal, $z\in R\hefresh R^\times$.
Since $\mathfrak{q}_1,\ldots,\mathfrak{q}_n$ are not in $\calF_b$, each of the conjugates of $z$ is co-prime to $b$. 
\end{proof}

\begin{Lemma}\label{cp6.5} 
Let $K_0$ be a number field and $K$ a finite Galois extension of $K_0$. Let $R$ be the ring of integers of $K$ and let $0 \neq a_1 \in R$. Let $J$ be a finite index set with $1 \notin J$. Then for each $j \in J$ there exists a primitive element $a_j \in R \hefresh R^\times$ of $K/K_0$ such that the sequence $\{a_1\} \cup \{a_j^\gamma\}_{j \in J, \gamma \in \Gal(K/K_0)}$ consists of pairwise co-prime elements.
\end{Lemma}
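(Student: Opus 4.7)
The plan is to deduce this from Lemma \ref{cp6.4} by induction on $|J|$, at each stage feeding into the role of $b$ in Lemma \ref{cp6.4} the product of all elements previously chosen (together with their conjugates). Since Lemma \ref{cp6.4} produces a primitive element of $K/K_0$ in $R\setminus R^\times$ whose full $\Gal(K/K_0)$-orbit consists of pairwise coprime elements, each of which is moreover coprime to the prescribed $b$, both parts of the required coprimality condition are handled simultaneously.

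Concretely, enumerate $J=\{j_1,\dots,j_m\}$. Set $b_1=a_1$ and apply Lemma \ref{cp6.4} with this $b_1$ to obtain a primitive element $a_{j_1}\in R\setminus R^\times$ of $K/K_0$ whose conjugates over $K_0$ are pairwise coprime and each coprime to $a_1$. Having constructed $a_{j_1},\dots,a_{j_{k-1}}$, define
$$
 b_k \;=\; a_1\cdot\prod_{l<k}\prod_{\gamma\in\Gal(K/K_0)}a_{j_l}^{\gamma},
$$
which is a nonzero element of $R$, and apply Lemma \ref{cp6.4} with $b=b_k$ to produce $a_{j_k}\in R\setminus R^\times$, a primitive element of $K/K_0$, whose conjugates are pairwise coprime among themselves and each coprime to $b_k$. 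By construction, every conjugate $a_{j_k}^{\gamma}$ is then coprime to $a_1$ and to every conjugate $a_{j_l}^{\gamma'}$ with $l<k$. Iterating through $k=1,\dots,m$ yields the required family $\{a_1\}\cup\{a_j^{\gamma}\}_{j\in J,\,\gamma\in\Gal(K/K_0)}$ of pairwise coprime elements.

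There is essentially no obstacle beyond setting up the bookkeeping: all the number-theoretic content (existence of a totally split prime avoiding a finite set, principalization via the class number, and primitivity of the resulting generator) is already packaged in Lemma \ref{cp6.4}. The only thing to check is that at each inductive step $b_k\neq 0$, which is immediate since $a_1\neq 0$ and each previously chosen $a_{j_l}\neq 0$, and that the conclusion of Lemma \ref{cp6.4} is strong enough to give coprimality of the new orbit with the entire previous orbit, which it is (it gives coprimality with every divisor of $b_k$).
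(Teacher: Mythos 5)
Your proof is correct and follows exactly the paper's argument: induction on the enumeration of $J$, applying Lemma \ref{cp6.4} at each step with $b$ equal to the product of $a_1$ and all conjugates of the previously chosen elements. No differences worth noting.
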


\begin{proof}
Let $J = \{j_1,\ldots,j_m\}$. Suppose we constructed $a_{j_1},\ldots,a_{j_{k-1}}$ (for $1 \leq k \leq m$). Then write 
$
 b = a_1 \cdot \prod_{1 \leq l \leq k-1, \gamma \in \Gal(K/K_0)} a_{j_l}^\gamma
$
and apply Lemma \ref{cp6.4} to produce the primitive element $a_{j_k}$. By induction, the elements obtained $\{a_j^\gamma\}_{j \in J, \gamma \in \Gal(K/K_0)}$ are pairwise co-prime and also co-prime to $a_1$.
\end{proof}

We are now ready to formulate and prove our Main Theorem:

\begin{Definition}
Let $K_0$ be a field, and $E_0$ an extension of $K_0$ contained in $K_0((t))$.
An FSEP $\Gal(F_1/E_0)\lsdp H\rightarrow\Gal(F_1/E_0)$ is {\bf $t$-unramified}
if the $t$-adic valuation is unramified in $F_1/E_0$.
\rembox
\end{Definition}

\begin{Theorem}\label{cp6.6} 
Let $A$ be the ring of integers of a number field $K_0$ and let $E_0 = \Quot(A\T)$. 
Then every $t$-unramified finite split embedding problem over $E_0$ has a $K_0$-regular solution.
\end{Theorem}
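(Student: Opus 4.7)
The plan is to combine the patching machinery of Sections 2--3 with the cyclic extensions of Section 4, following the axiomatic framework of Haran-Jarden-V\"olklein. The first step would be to use Lemma \ref{cp6.1} together with Corollary \ref{cp5.4} to reduce to a convenient form: after enlarging $F_1$ if necessary, arrange that the algebraic closure $K$ of $K_0$ in $F_1$ is Galois over $K_0$, contains a primitive $|H|$-th root of unity, and satisfies $F_1 = K \cdot E_0$. Setting $R = \mathcal{O}_K$, Proposition \ref{cp6.3} then identifies $E := \Quot(R\T)$ with $K \cdot E_0 = F_1$, so $\Gamma := \Gal(F_1/E_0) \cong \Gal(K/K_0)$, and the task becomes the construction of a $\Gamma$-equivariant Galois extension $M$ of $E$ with $\Gal(M/E) \cong H$.

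For the patching setup, choose a generating set $\{h_j\}_{j \in J}$ of $H$ that is closed under the $\Gamma$-action (take $\Gamma$-orbit representatives of a generating set and close up). Let $I = \{1\} \cup J$, with $\Gamma$ fixing $1$ and permuting $J$ via its action on generators. Apply Lemma \ref{cp6.5} to produce $a_1 \in A := \mathcal{O}_{K_0}$ (hence $\Gamma$-fixed) and non-invertible $a_j \in R$ for $j \in J$ such that the family $\{a_1\} \cup \{a_j^\gamma : j \in J,\, \gamma \in \Gamma\}$ is pairwise coprime in $R$; by relabeling, arrange $a_{j^\gamma} = a_j^\gamma$, placing us in Construction \ref{cp2.17}. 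For each orbit representative $j \in J$, invoke Proposition \ref{cp4.3} with $k = |h_j|$ and $a = a_j$ to produce a cyclic extension $F_j \subseteq Q_j' = \Quot(R[1/a_j]\T)$ of $E$ of degree $|h_j|$ realizing $\langle h_j \rangle$; for the remaining indices set $F_{j^\gamma} := F_j^\gamma \subseteq Q_{j^\gamma}'$ via the $\Gamma$-action.

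The final step is to invoke the HJV axiomatic patching (\cite{HaV96, HaJ98a, HaJ98b}), whose key hypotheses are supplied by our analytic ring setup: the intersection $\bigcap_i Q_i = E$ (Proposition \ref{cp2.15}), the matrix factorization $\GL_n(Q) = \GL_n(Q_i') \GL_n(Q_i)$ (Corollary \ref{cp3.5}), and the additive decomposition of Proposition \ref{cp2.16}. Fed with the $\Gamma$-equivariant cyclic realizations $\{F_j\}$, the patching produces a Galois extension $M \subseteq Q$ of $E$ with $\Gal(M/E) \cong H$, stable under the $\Gamma$-action on $Q$. Thus $M/E_0$ is Galois with $\Gal(M/E_0) \cong \Gamma \ltimes H$ and the prescribed semidirect-product structure, giving a solution to the FSEP. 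For $K_0$-regularity: $M \subseteq Q \subseteq K((t))$, and the algebraic closure of $K_0$ in $K((t))$ is $K$ itself.

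I expect the main obstacle to be the rigorous installation of our analytic rings into the HJV framework, together with the $\Gamma$-equivariant bookkeeping through the patching, so that the resulting extension carries exactly the prescribed action of $\Gamma$ on $H$ and not merely an abstract isomorphism. A secondary but nontrivial subtlety is the initial reduction to $F_1 = K \cdot E_0$: since $F_1$ need not a priori be generated over $E_0$ by constants, this step requires a careful combination of Lemma \ref{cp6.1} (to enlarge the top field) with the embedding of $F_1$ into $K((t))$ provided by Corollary \ref{cp5.4}, exploiting that $F_1/E_0$ is assumed $t$-unramified.
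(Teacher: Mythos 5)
Your overall architecture (enlarging the constants, coprime primitive elements via Lemma \ref{cp6.5}, cyclic patches from Proposition \ref{cp4.3}, the Haran--Jarden--V\"olklein patching fed by Propositions \ref{cp2.15}, \ref{cp2.16} and Corollary \ref{cp3.5}, and regularity from $Q\subseteq K((t))$) matches the paper. But your opening reduction is a genuine gap: you cannot ``arrange that $F_1=K\cdot E_0$''. Lemma \ref{cp6.1} only lets you \emph{enlarge} the top field and the kernel; since the enlarged field $E'$ must contain $F_1$, it can never be the constant extension $E=\Quot(\mathcal{O}_K\T)$ unless $F_1$ was constant to begin with, and the $t$-unramified hypothesis does not force this. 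For example, the cyclic extensions produced by Proposition \ref{cp4.3} over $E_0=\Quot(\Z\T)$ itself are $t$-unramified (the root $g$ specializes to $1$ at $t=0$) and have trivial constant field extension, so for them $K\cdot E_0=E_0\subsetneq F_1$. Consequently your stated target --- a $\Gamma$-equivariant $H$-extension $M$ of $E$ --- is not a solution of the embedding problem: a solution must contain $F_1$ and realize $\Gal(F_1/E_0)\lsdp H$, so the subgroup $G_1:=\Gal(F_1/E)$ and its prescribed action on $H$ must survive into the solution field, whereas your construction simply discards them.

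The paper's fix is to keep $F_1$ as one of the patches rather than try to absorb it into the base: after Part A one only arranges that the place $t\mapsto 0$ is $K$-rational and unramified in $F_1$ and that $\zeta_{|H|}\in K$; then Corollary \ref{cp5.4} embeds $F_1$ into $\Quot(R[\frac{1}{a_1}][[t]])\subseteq Q_1'$ --- this is where $a_1$ actually comes from (not from Lemma \ref{cp6.5}), and it is why the index $1$ carries the \emph{formal} power series ring $D_1'=R_1'[[t]]$ in Construction \ref{cp2.14} while the cyclic patches live in convergent rings. The patching data then takes $G_1=\Gal(F_1/E)$ at the index $1$ and patches the group $G=G_1\lsdp H$, so the compound $F$ satisfies $\Gal(F/E)\cong G_1\lsdp H$ and $\Gal(F/E_0)\cong\Gamma\lsdp G=\Gal(F_1/E_0)\lsdp H$ with the correct restriction to $F_1$. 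With that correction, the rest of your outline (indexing $J$ by a $\Gamma$-closed generating set of $H$ rather than by all of $H\times\Gamma$ is a harmless variation) goes through as you describe.
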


\begin{proof}
Let $F_1/E_0$ be a finite Galois extension, such that 
the place $\phi:t\rightarrow 0$ of $E_0$ is unramified in $F_1/E_0$, 
and assume that $\Gal(F_1/E_0)$ acts on a non-trivial group $H$. 
Let $K$ be the algebraic closure of $K_0$ in $F_1$.
We wish to solve the problem $\Gal(F_1/E_0) \lsdp H \to \Gal(F_1/E_0)$ regularly over $K_0$. 

Let $R=\mathcal{O}_K$ be the ring of integers of $K$, and let $E = \Quot(R\T)$. 
First observe that $E \subseteq F_1$. Indeed, by Proposition \ref{cp6.3} we have $R\T = \bbZ\T\cdot R \cont E_0 \cdot K \cont F_1$. Moreover, this shows that $E = E_0 \cdot K$. Since $R\T \cont K[[t]]$ we have $E \cont K((t))$, hence $E/K$ is a regular extension. Similarly, $E_0/K_0$ is a regular extension.

\vspace{0.2cm}
\subdemoinfo{Part A}{Enlarging $K$}

\noindent
This part is the proof of \cite[Proposition 4.2]{HaJ98b}. 
Extend $\phi$ to $F_1$, and let $\bar{F}_1$ be the corresponding residue field. Then $\bar{F}_1$ is a finite Galois extension of $K_0$ that contains $K$. 
Let $K'=\bar{F}_1(\zeta_k)$, where $k=|H|$ and $\zeta_k$ is a primitive $k$-th root of unity.
Let $F_1' = F_1K', E' = EK'$. Then $\phi$ extends to a $K'$-rational place $\phi'$ of $F_1'$, unramified over $E'$. Moreover, $F_1'/E_0$ is a Galois extension and its Galois group acts on $H$ via the restriction $\Gal(F'_1/E_0) \to \Gal(F_1/E_0)$. By Lemma \ref{cp6.1} and Proposition \ref{cp6.3} we may replace $F_1,E, K$ with $F_1',E', K'$ to assume that $\phi$ is a $K$-rational place of $F_1$, unramified over $E$,
and that $\zeta_k\in K$. Put $\Gamma = \Gal(K/K_0)$. Since $E_0/K_0$ is regular, $E_0$ and $K$ are linearly disjoint over $K_0$, thus $\Gal(E/E_0)$ is isomorphic to $\Gamma$ by restriction of automorphisms. Put $G_1 = \Gal(F_1/E)$.

$$ 
\begin{xy}
\xymatrix@=10pt{&& && F_1 \ar@{-}[dd]^{G_1} \ar@{->}[rrdd]^\phi \\
\\
&& K \ar@{-}[rr] \ar@{-}[dl] \ar@{-}[dd] && E \ar@{-}[dl] \ar@{-}[dd]^{\Gamma} \ar@{->}[rr]^\phi && K\cup\{\infty\}\\
& R \ar@{-}[rr] \ar@{-}[dd] && R\T\ar@{-}[dd]\\
&& K_0 \ar@{-}[dl]\ar@{-}[rr] && E_0\ar@{-}[dl] \\
& A \ar@{-}[rr] && A\T }
\end{xy}
$$

\subdemoinfo{Part B}{Patching data}
\nopagebreak

\noindent
By Corollary \ref{cp5.4} we may assume that $F_1 \cont \Quot(R[{1 \over a_1}][[t]])$, for some $0 \neq a_1 \in A$. We extend $(E,F_1)$ to a patching data $\calE = (E , F_i, Q_i , Q; G_i, G )_{i \in I}$ in the sense of \cite[Definition 3.1]{HaJ98b}, as follows: Write $H$ as
\begin{eqnarray}
H=\{\tau_j\st j\in J\} \label{cf2}
\end{eqnarray}
with the index set $J$ of cardinality as of $H$ such that 
$1$ (as a symbol) does not belong to $J$.
Set $I_2=J\times\Gamma$ and let $\Gamma$ act on $I_2$ by
$(j,\gamma')^\gamma=(j,\gamma'\gamma)$. Identify $(j,1)\in I_2$
with $j$, for each $j\in J$. Then
every $i\in I_2$ can be uniquely written as
\begin{eqnarray}
\label{cf3} \mbox{  $i=j^\gamma$ with $j \in J$ and $\gamma\in \Gamma$. }
\end{eqnarray}

\noindent Let $I=\{1\}\dotcup I_2$ and extend the action of
$\Gamma$ on $I_2$ to $I$ by $1^\gamma = 1$ for each
$\gamma\in\Gamma$.

By Lemma \ref{cp6.5}, for each $j \in J$ we find a primitive element $c_j \in R \hefresh R^\times$ of $K/K_0$, such that the elements $\{c_j^\gamma\}_{j \in J, \gamma \in \Gamma}$ are pairwise co-prime and also co-prime to $a_1$. 
For each $i \in I\hefresh \{1\}$, we define an element $a_i \in R$ by writing $i = j^\gamma$ for some $j \in J, \gamma \in \Gamma$ and letting $a_i = c_j^\gamma$. Then $a_i^\gamma = a_{i^\gamma}$ for each $i \in I, \gamma \in \Gamma$ (note that this also includes the case of $i = 1$). We now use the analytic rings given by Setup \ref{cp2.10} and Construction \ref{cp2.14}. 
Note that (using the notations of Setup \ref{cp2.10}) $a_I = \prod_{i \in I}a_i = a_1 \cdot \prod_{j \in J} \norm_{K/K_0}(a_j) \in K_0^\times$. 
Extend the action of $\Gam$ to $Q$ by Construction \ref{cp2.17}. Then $Q_i^\gamma=Q_{i^\gamma}$ and $(Q'_i)^\gamma=Q'_{i^\gamma}$ for all $i\in I$ and $\gamma \in \Gam$.

\vspace{0.2cm}
\subdemoinfo{Part C}{Groups}

\noindent
This part repeats Part D of the proof of \cite[Proposition 4.1]{HaJ98b}.

Since $F_1\subseteq Q$ is a Galois extension of $E_0$, it is
$\Gamma$-invariant.
The action of $\Gamma$ on $K$ is faithful, hence it is faithful on each of the fields $E\subseteq F_1\subseteq Q$.
Thus we may identify $\Gamma$ with its image in $\Gal(F_1/E_0)$.
Then $\Gal(F_1/E_0)=\Gam\lsdp G_1$, where $\Gamma$ acts on $G_1$
by conjugation in $\Gal(F_1/E_0)$.
Thus
\begin{eqnarray}
\label{cf4}  \mbox{ $(a^\tau)^\gamma=(a^\gamma)^{\tau^\gamma}$ for all $a\in F_1$, $\tau\in G_1$, and $\gamma\in\Gamma$.}
\end{eqnarray}
The given action of $\Gal(F_1/E_0)$ on $H$
restricts to actions of its subgroups $G_1$ and $\Gamma$ on $H$.
Let $G=G_1\lsdp H$ with respect to this action,
and let $\Gamma$ act on $G$ componentwise by its action on $G_1$ and $H$.
Then
$$
\Gal(F_1/E_0)\lsdp H
=(\Gam\lsdp G_1)\lsdp H
=\Gam\lsdp(G_1\lsdp H)
=\Gam\lsdp G.
$$
Let $i \in I_2$. Use (\ref{cf3}) to write $i = j^{\gamma'}$ with unique
$j \in J$ and $\gamma' \in \Gamma$. Then define $\tau_i =
\tau_j^{\gamma'} \in H$ and observe that 
\begin{eqnarray}
\label{cf5a}
\mbox{$\tau_i^\gamma=\tau_{i^\gamma}$ for all $i\in I_2$ and $\gamma\in\Gamma$.}
\end{eqnarray}
By (\ref{cf2}), 
\begin{eqnarray}
\label{cf5b} H = \langle \tau_i \st i \in I_2 \rangle.
\end{eqnarray}

\noindent
For each $i \in I_2$ let $G_i = \langle \tau_i \rangle \le H$.
Then,
\begin{eqnarray}
\label{cf5c} &&G=\langle G_i\st i\in I\rangle \mbox{ and }H=\langle G_i\st i\in I_2\rangle; \\
\label{cf5d} &&G_i^\gamma=G_{i^\gamma}\mbox{ for all }i\in I\mbox{ and }\gamma\in\Gamma;\\
\label{cf5e} &&|I|\ge 2.
\end{eqnarray}

\subdemoinfo{Part D}{Solution of $\Gal(F_1/E_0) \lsdp H \to \Gal(F_1/E_0)$} 

\noindent
Let $j\in J$.
Since by the reductions made in {\sc Part A}, $K$ contains a primitive $k$-th root of unity, and $|G_j|$ divides $k=|H|$,
Proposition \ref{cp4.3} gives a cyclic extension $F_j/E$ with group $G_j=\langle\tau_j\rangle$, such that $F_j \cont \Quot(R_j'\T) = \Quot(D_j') \cont Q_j'\subseteq Q$.

For an arbitrary $i\in I_2$ there exist unique $j\in J$ and
$\gamma\in\Gam$ such that $i=j^\gamma$ (by (\ref{cf3})). Let
$F_i=F_j^\gamma$. Since $\gamma$ acts on $Q$ and leaves $E$ invariant, $F_i$ is a Galois
extension of $E$ and $F_i\cont Q'_i$. The isomorphism $\gamma\colon
F_j\to F_i$ gives an isomorphism $\Gal(F_j/E)\isom\Gal(F_i/E)$
which maps each $\tau\in\Gal(F_j/E)$ onto
$\gamma\circ\tau\circ\gamma^{-1}\in\Gal(F_i/E)$. In particular, it
maps $\tau_j$ onto $\gamma\circ\tau_j\circ\gamma^{-1}$. We may
therefore identify $G_i$ with $\Gal(F_i/E)$ such that $\tau_i$
coincides with $\gamma\circ\tau_j\circ\gamma^{-1}$. This means that
$(y^\tau)^\gamma=(y^\gamma)^{\tau^\gamma}$ for all $y\in F_j$ and
$\tau\in G_j$.

By Proposition \ref{cp2.15} we have $\bigcap_{i \in I}Q_i = E$, and by Corollary \ref{cp3.5} for $n = |G|$ we have $\GL_n(Q) = \GL_n(Q_i)\cdot\GL_n(Q_i')$ for each $i \in I$. By (\ref{cf5c}) we have $G = \langle G_i \st i\in I \rangle$. Thus $\calE=(E,F_i,Q_i,Q;G_i,G)_{i\in I}$ is a patching data in the sense of \cite[Definition 3.1]{HaJ98b}.

Since $F_1$ is invariant under $\Gamma$, it follows that for all $i\in I$
and $\gamma\in\Gam$ we have $F_i^\gamma=F_{i^\gamma}$. Moreover,
$(a^\tau)^\gamma=(a^\gamma)^{\tau^\gamma}$ for all $a\in F_i$ and
$\tau\in G_i$. This generalizes (\ref{cf4}).

We have shown that $\Gam$ acts properly on $\calE$, in the sense of \cite[Definition 3.1]{HaJ98b}. By \cite[Corollary 3.4(e)]{HaJ98b} the ``compound" $F\subseteq Q$ of $\calE$ is a solution for the split embedding problem $\Gal(F_1/E_0) \lsdp H \to \Gal(F_1/E_0)$.
Since $K\subseteq F\subseteq Q\subseteq K((t))$ and $K((t))/K$ is regular, so is $F/K$.
\end{proof}

The field $E =\Quot(\Z\T)$ is Hilbertian by \cite[Proposition 4.3]{FPKlein}. 
In \cite{DD} it is conjectured that every FSEP over a Hilbertian field is
solvable. This has been proven for function fields over ample fields in
\cite{Pop,HaJ98b}. The field $E$ above can be viewed as an analogue of the
(non-ample) field $K((x))(t)$ of rational functions over the complete (and hence
ample) field $K((x))$ (where $K$ is some field), see discussion in \cite[p.~855]{HarbaterGCAS}.
However, it is unclear from this analogy how to prove the non-ampleness
of $E$. Moreover, unlike the function field case where all geometric
primes play an equivalent role, in $E$ the prime $t\mapsto 0$ is distinguished,
which leads to the ramification constraint of Theorem \ref{cp6.6}. More
precisely, in the function field case, if $t\mapsto 0$ is ramified then one
chooses a non-ramified point $a \in K$ and applies the automorphism $t\mapsto t+a$
of $K[t]$ to assume that $t\mapsto 0$ is unramified (see \cite[Proof of Proposition 4.2(a)]{HaJ98b}). 
In contrast, the map $t\mapsto t+a$ does not extend to an
automorphism of $\Z\T$ (and not even to a $\C$-automorphism of $\C_{1^-}[[t]]$) for any $a \neq 0$. Nevertheless, Theorem \ref{cp6.6}
gives our main objective:

\begin{Corollary}
If $K$ is a number field with ring of integers $\mathcal{O}_K$, 
then every finite split embedding problem over $K$ has a $K$-regular solution over $\Quot(\mathcal{O}_K\T)$.
\end{Corollary}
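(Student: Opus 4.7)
The plan is to reduce the corollary directly to Theorem \ref{cp6.6} by lifting the given embedding problem from $K$ to $E_0=\Quot(\mathcal{O}_K\T)$ and verifying that this lift satisfies the $t$-unramifiedness hypothesis of the theorem. The whole content is really in checking that the ramification condition comes for free whenever the upper field is obtained by adjoining only constants.

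First I would take an arbitrary FSEP $\Gal(L/K)\lsdp H\to\Gal(L/K)$ over $K$, where $L/K$ is a finite Galois extension of number fields, and form the induced FSEP $\Gal(E_0L/E_0)\lsdp H\to\Gal(E_0L/E_0)$ over $E_0$ as in the definition given just before Theorem \ref{cp6.6}. For this step to make sense I need $E_0/K$ to be a regular extension, so that the restriction map induces $\Gal(E_0L/E_0)\cong\Gal(L/K)$ and the action of the Galois group on $H$ transfers. Regularity holds because $E_0\subseteq K((t))$, and $K((t))/K$ is regular: any element of $K((t))$ algebraic over $K$ must have $t$-adic valuation $0$ (otherwise the distinct monomials in its minimal polynomial would have distinct valuations and could not sum to zero), and then a residue argument against the residue field $K$ forces it to lie in $K$.

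Next I would verify that the induced problem is $t$-unramified, i.e.\ that the $t$-adic valuation on $E_0$ is unramified in $E_0L/E_0$. The $t$-adic valuation on $E_0$ has uniformizer $t$ (since $t\in\mathcal{O}_K\T$ has value $1$), and its extension to the Galois extension $E_0L\subseteq L((t))$ is just the restriction of the $t$-adic valuation on $L((t))$, which again has $t$ as uniformizer. Hence the ramification index is $1$; since all extensions of the valuation to $E_0L$ are conjugate under $\Gal(E_0L/E_0)$, they are all unramified. Finally, applying Theorem \ref{cp6.6} with $K_0=K$ and $A=\mathcal{O}_K$ to this $t$-unramified FSEP yields a $K$-regular solution, which by definition is precisely a $K$-regular solution over $\Quot(\mathcal{O}_K\T)$ of the original FSEP over $K$.

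The main (and essentially only) obstacle is the unramifiedness check, and even that is nearly a tautology once one observes that $L/K$ introduces only ``constants'' with respect to $t$, so that $t$ remains a uniformizer in the top field. No new analytic or Galois-theoretic machinery beyond Theorem \ref{cp6.6} is needed.
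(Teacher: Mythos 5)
Your proposal is correct and follows the same route as the paper: form the induced FSEP over $E_0=\Quot(\mathcal{O}_K\T)$, observe that because $L/K$ is a constant (algebraic) extension and $E_0/K$ is regular the $t$-adic valuation stays unramified in $E_0L/E_0$, and apply Theorem \ref{cp6.6}. Your explicit verifications of regularity via $E_0\subseteq K((t))$ and of $e=1$ via $E_0L\subseteq L((t))$ just spell out what the paper states in one line.
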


\begin{proof}
Let $E=\Quot(\mathcal{O}_K\T)$ and let $\Gal(L/K)\lsdp H\rightarrow\Gal(L/K)$ be an FSEP over $K$.
Since $E/K$ is regular and $L/K$ is algebraic, the $t$-adic valuation is totally inert in $EL/E$,
so that the induced embedding problem $\Gal(EL/E)\lsdp H\rightarrow\Gal(EL/E)$ over $E$ is $t$-unramified.
Thus the claim follows from Theorem \ref{cp6.6}.
\end{proof}

\setlength{\parskip}{0cm}
\bigskip


\end{document}